\newenvironment{centerverbatim}{%
  \par
  \centering
  \varwidth{\linewidth}%
  \verbatim
}{%
  \endverbatim
  \endvarwidth
  \par
}
\newcommand{\ModCmp}[3]{#1 \equiv #2 \;(\bmod\; #3)}
\DeclareMathOperator{\Cyc}{Cyclic}
\DeclareMathOperator{\dis}{d}
\DeclareMathOperator{\Exoo}{Exoo}
\DeclareMathOperator{\e}{e}
\theoremstyle{plain}
\newtheorem{prop}{Proposition}[section]
\newtheorem{thm}{Theorem}[section]
\newtheorem{lem}{Lemma}[section]
\newtheorem{conj}{Conjecture}[section]
\theoremstyle{definition}
\newtheorem{eg}{Example}[section]
\newtheorem{defn}{Definition}[section]
\title{Study of Exoo's lower bound for Ramsey number \boldmath{$R(5,5)$}}
\author[A.Q, Y.J, M.S, V.Y, L.G]{Lachlan Ge, Yasiru Jayasooriya, Alex Qiu, Michael Sun, Victor Yuan, }
\date{November 2022}
\begin{document}

\begin{abstract}
    We review Exoo's 1989 paper, which demonstrates that a lower bound for the Ramsey number $R(5,5)$ is $43$. We provide an efficient way to verify the claims in the paper, adding detailed proofs. In particular, we replace the reference to computer verification by concise arguments. Using our understanding of the insight behind these proofs, we are also able to analyse variations of the graph constructions to obtain, for example, colourings of $K_{43}$ which have very few monochromatic $K_5$.
\end{abstract}
\maketitle

\section*{Introduction}
What is the smallest number of people that there can be in a group which guarantees that there will always be three people who all know each other or three people who all don't know each other? This is a well-known puzzle whose answer is six. \\

However, if we consider a slightly larger variation and ask ``What is the minimum number of people for which we can guarantee that there exists five people who all know each other or five people who all don't know each other?'' then we have a problem whose answer is currently unknown! The answer to this question is denoted by $R(5, 5)$, the Ramsey number $(5,5)$, and determining its value is an open problem. \\ 

There is an amusing quote from Paul Erdős regarding this problem:

\begin{quote}
Imagine an alien force, vastly more powerful than us landing on Earth and demanding the value
of $R(5, 5)$ or they will destroy our planet. In that case, we should marshal all our computers and
all our mathematicians and attempt to find the value. But suppose, instead, that they asked for
$R(6, 6)$, we should attempt to destroy the aliens.
\end{quote}

It is currently known that $R(5,5)$ is between $43$ (Geoffrey Exoo 1989) and $48$ (Vigleik Angeltveit and  Brendan D. McKay 2017) inclusive and there is evidence that leads to the following conjecture \cite{MR}:

\begin{conj}[McKay--Radsizowski]

$$R(5,5)=43.$$
\end{conj}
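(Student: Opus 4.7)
The plan is to split the equality into two inequalities, $R(5,5) \geq 43$ and $R(5,5) \leq 43$, and address each separately, since current techniques attack the two bounds by completely different means.

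For the lower bound $R(5,5) \geq 43$, I would exhibit a $2$-edge-colouring of $K_{42}$ containing no monochromatic $K_5$; this is precisely Exoo's 1989 circulant-based construction on $\mathbb{Z}_{42}$, and the main body of the paper is devoted to verifying it without recourse to the brute-force computer check in the original source. The key step is to show that each of the two colour classes, viewed as a Cayley-like graph on $\mathbb{Z}_{42}$, contains no $K_5$; the idea is to use the cyclic symmetry to reduce the number of potential $K_5$'s one must rule out to a small list, then eliminate each by a short combinatorial argument on chord-length multisets.

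For the upper bound $R(5,5) \leq 43$, I would need to prove that every $2$-edge-colouring of $K_{43}$ contains a monochromatic $K_5$. The natural first attempt is the Erd\H{o}s--Szekeres neighbourhood argument: fix a vertex $v$ of $K_{43}$ and partition the remaining $42$ vertices into its red- and blue-neighbourhoods $R(v), B(v)$. Using $R(4,5) = R(5,4) = 25$, any vertex with red-degree $\geq 25$ or blue-degree $\geq 25$ forces a monochromatic $K_5$, so in a putative counterexample every vertex is $(24,24)$-bichromatic. Since $24 + 24 = 48 > 42$, this leaves room for many colourings, so the bound must be tightened by structural analysis: one would catalogue the $(4,5;24)$- and $(5,4;24)$-critical graphs, enforce local compatibility between the $43$ neighbourhood pairs, and derive a contradiction from a global counting or eigenvalue inequality.

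The main obstacle, by a wide margin, is the upper bound. The best result in the literature is $R(5,5) \leq 48$ (Angeltveit--McKay 2017), obtained through a sustained computer-assisted enumeration of Ramsey-critical graphs; closing the remaining gap of five requires either a fundamentally new combinatorial reduction on the space of $(5,5;n)$-colourings or a substantial leap in computational feasibility, and the present paper supplies neither. I therefore expect the plan to succeed only on the $\geq 43$ side, which is why the statement is rightly labelled a conjecture rather than a theorem.
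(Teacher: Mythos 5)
You are right to conclude that this statement cannot currently be proved: it is stated in the paper as a conjecture, the paper offers no proof of it, and only the lower bound $R(5,5)\geq 43$ is actually established (by Exoo's construction, whose verification is the entire content of Sections 1--5). Your candid admission that the plan succeeds only on the $\geq 43$ side is exactly the state of the art. On the upper bound there is nothing in the paper to compare against; note only that the neighbourhood argument you sketch with $R(4,5)=R(5,4)=25$ gives just $R(5,5)\leq 50$ (in $K_{43}$ every vertex can have red- and blue-degree at most $24$ while still summing to $42$, so no vertex is forced over the threshold), and the improvement to $48$ by Angeltveit and McKay is a massive computer enumeration, not a counting or eigenvalue inequality.

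On the lower-bound side, where the paper does supply a proof, your description contains a genuine error: Exoo's graph is \emph{not} a circulant (or Cayley-like) graph on $\mathbb{Z}_{42}$. It is obtained from the cyclic colouring $\Cyc(43)$ on $43$ vertices by deleting vertex $0$ and then recolouring sixteen specific edges from red to blue, and this recolouring destroys the cyclic symmetry. Consequently the strategy of ``using the cyclic symmetry to reduce to a small list'' applies only to $\Cyc(43)$ itself, not to the final graph $\Exoo(42)$. The paper's actual route is: classify all $43$ red $K_5$'s of $\Cyc(43)$ and show they are the only ones (Section 3); check that the vertex deletion and the sixteen recolourings destroy every one of them (Section 1); show $\Cyc(43)$ has no blue $K_5$ (Section 4); and then, separately and without the benefit of cyclic symmetry, check that none of the sixteen recoloured edges lies on a new blue $K_5$ (Section 5). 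If you carried out your plan as literally stated, the symmetry reduction would be unjustified for the recoloured edges and the verification of the $42$-vertex colouring would be incomplete.
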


In 1989 Exoo \cite{Exoo} published a construction which showed:
\begin{thm}[Exoo]
$$R(5,5)\geq 43.$$
\end{thm}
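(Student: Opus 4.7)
The plan is to prove the lower bound by exhibiting an explicit edge 2-colouring of the complete graph $K_{42}$ that contains no monochromatic $K_5$. If we can produce such a colouring, then every 2-colouring of $K_n$ for $n \leq 42$ can fail to contain a monochromatic $K_5$, and hence $R(5,5) > 42$, i.e.\ $R(5,5) \geq 43$. Equivalently, we must construct a graph $G$ on $42$ vertices such that neither $G$ nor its complement $\compconj{G}$ contains $K_5$ as a subgraph.

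First I would follow Exoo by taking a circulant (Cayley) graph on $\mathbb{Z}/42\mathbb{Z}$: fix a symmetric connection set $S \subseteq \mathbb{Z}/42\mathbb{Z} \setminus \{0\}$ (so $s\in S \iff -s \in S$) and let $G = \Cyc(42, S)$ have edges $\{i,j\}$ whenever $j-i \in S$. The complement $\compconj{G}$ is then also circulant, with connection set the complementary set of nonzero residues. This reduces the problem from picking an arbitrary graph to specifying a single subset $S$, and it grants $G$ a vertex-transitive automorphism group (at minimum the cyclic shift $x\mapsto x+1$, and possibly additional multiplicative symmetries $x \mapsto ax$ that preserve $S$). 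I would then state Exoo's specific choice of $S$ from \cite{Exoo}.

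The heart of the argument is then to verify that neither $G$ nor $\compconj{G}$ contains a $K_5$. Here I would lean heavily on the circulant symmetry: any $K_5$ in $G$ can be translated so that one of its vertices is $0$, reducing the search to $4$-element subsets $\{a,b,c,d\} \subseteq S$ with all pairwise differences $b-a,c-a,\dots,d-c$ also in $S$. If additional multiplicative symmetries $x\mapsto ax$ preserve $S$, one may further restrict to orbit representatives under the induced action on $4$-subsets. The plan is to enumerate these representatives explicitly, organise them by the smallest element or by ``difference pattern,'' and rule each out by a short combinatorial check that some required difference lies outside $S$. The same analysis applied to the complementary connection set handles $K_5$'s in $\compconj{G}$.

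I expect the main obstacle to be the case analysis in the last step: even after exploiting all available symmetries, one must show that a non-trivial number of candidate $4$-subsets fail to form a $K_4$ in the link of $0$, and doing this with \emph{concise arguments} rather than a brute computer search (as the abstract promises) will require finding structural features of $S$ — for instance, identifying ``forbidden'' difference sums modulo $42$ — that kill many candidates at once. Producing these structural lemmas, rather than merely stating Exoo's set $S$, is where the real work lies.
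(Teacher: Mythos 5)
Your overall framing (exhibit a two-colouring of $K_{42}$ with no monochromatic $K_5$) is correct, but the construction you propose to verify is not the one Exoo uses, and the gap is not cosmetic. You assume the witness is a circulant graph on $\mathbb{Z}/42\mathbb{Z}$ and say you would ``state Exoo's specific choice of $S$''; no such connection set appears in Exoo's paper, and the whole point of his construction is that it is \emph{not} a circulant on $42$ vertices. Exoo instead defines a cyclic colouring $\Cyc(43)$ on $43$ vertices (red lengths $1,2,7,10,12,13,14,16,18,20,21$), which is \emph{not} itself monochromatic-$K_5$-free: it contains exactly $43$ red $K_5$'s, namely the translates of $(0,1,2,22,23)$. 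The witness for $R(5,5)\geq 43$ is obtained by deleting one vertex and then recolouring $16$ carefully chosen edges from red to blue so as to destroy every one of the $38$ surviving red $K_5$'s. Your plan as written therefore cannot be completed: there is no set $S$ to plug in, and if a $K_5$-free circulant colouring of $K_{42}$ existed it would have been found by the exhaustive searches over connection sets that predate Exoo's work.

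The second consequence of this misidentification is that your verification strategy --- translate any $K_5$ so that it contains $0$ and search $4$-subsets of the link of $0$ under the full circulant symmetry --- only applies to the unmodified $\Cyc(43)$. That part of your plan does match the paper's treatment of $\Cyc(43)$ (the paper additionally normalises using a pigeonhole argument forcing a gap of length at most $8$). But after the vertex deletion and the $16$ recolourings the graph is no longer vertex-transitive, so the bulk of the remaining work is structured differently: one shows that the recolourings kill all $43$ old red $K_5$'s and create no new red ones (since the changes are all red $\to$ blue), and then separately shows that no new \emph{blue} $K_5$ can use any of the recoloured edges, by analysing the common blue neighbourhoods of each recoloured edge $\e(a,a+1)$ and of $\e(11,32)$. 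None of this case analysis is anticipated by a purely circulant reduction, so you would need to restructure the proof around the modified graph rather than around a single connection set.
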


Exoo's paper will be the focus of our study. The problem is represented using a \emph{graph}, where the vertices represent the people and the edges represent the relationship between pairs of people. For concreteness, we use a \emph{complete graph}, which is a graph where every pair of vertices is joined by an edge. We then colour every edge in this complete graph one of two colours (either red or blue) to represent whether each pair of people knows or does not know each other. \\

We use the usual notation of $K_n$ for a complete graph with $n$ vertices. In the following sections, we will define a \textit{red} $K_5$ to be a complete graph with $5$ vertices where all of its edges are red and define a \textit{blue} $K_5$ similarly for blue edges. Also, we will denote an edge in the graph between vertices $u, v$ by $\e(u, v)$. \\

In the first section we recall the definition of Exoo's graph from Exoo's original paper. In the other sections, we provide explicit proofs for the claims whose proofs were omitted in the original paper, making everything self contained. The highlight of this is showing in Section 5 that no blue $K_5$'s exist in Exoo's graph, which was left to computer verification in the original paper. \\

\textbf{Section 1:} Definition of Exoo's graph: We recall the contents of Exoo's paper and organise them into statements and add some diagrams.
\vspace{0.5cm}

\textbf{Section 2:} Helpful observations: We state and explain some general principles that have helped to save significant time and effort in calculations and casework.
\vspace{0.5cm}

\textbf{Section 3:} Red checking: We write a complete proof that there are no red $K_5$'s in Exoo's graph by reducing it to 3 cases.
\vspace{0.5cm}

\textbf{Sections 4 and 5:} We write a complete proof that there are no blue $K_5$'s in Exoo's graph.

\subsection{Acknowledgements} Special thanks to Professors McKay, Radsizowski and Exoo for their helpful comments and encouragement.\\

Contributions from Rhianna Kho, Callum Kho, George Chen, Minh Lam, Martin Li, Nathan Wong, Charlotte Sun, Stefan Lu, Finn McDonald, Ryan Zhang, Leon Zhou, Andy Shan, Daniel Seo and Liam Celinski.\\

Comment from MS: The value of this sort of work should not be underestimated. It is often the case that a result is obtained in research mathematics before it is fully understood conceptually. This article represents one small but crucial step in the direction of understanding this result. I would even say that reaching a complete understanding of why $43$ is the lower bound is equivalent to resolving the conjecture. There are already so many new and interesting possibilities from this vantage point that did not exist before.\\

From an educational perspective, the advantage of having an article like this is clear: it means that Exoo's result can now be taught to students in a meaningful way, which could not have been done before. The nature of the problem makes it particularly appealing to young students.\\


\section{Definition of Exoo's graph}

First, we define an edge colouring on $K_{43}$. Then, we remove one vertex and finally change the colour of some edges.

\subsection{Definition of the colouring for $K_{43}$}

Exoo first defined a colouring of $K_{43}$:

\begin{defn}[Cyclic(43)]
Arrange $43$ points equally spaced around a circle and label them from $0$ to $42$ in clockwise order and in modulo $43$ (so that for all integers $a$, the vertex $a$ is the same as vertex $a+43$). For any two distinct vertices $a, b$ we define the \textit{distance} (denoted by $\dis(a, b)$) between them as the length of the minor arc between the vertices (where we define the minor arc length between adjacent vertices to be $1$).\\

Note that $\dis(a,b)$ can also be defined as the unique integer ${m \in \{ 1, \ldots, 21 \}}$ such that either ${\ModCmp{a-b}{m}{43}}$ or ${\ModCmp{b-a}{m}{43}}$.\\

We will also define the \textit{length} of an edge to be the distance between the two vertices that form the edge. Observe that all edge lengths will be between $1$ and $21$ inclusive. The edges will be coloured depending on their lengths, as shown in the following table, where each length from $1$ to $21$ corresponds to either the colour red or blue.
\vspace{0.35cm}
\begin{center}
\begin{tabular}{|c|c|c|c|c|c|c|c|c|c|c|c|}
\hline
Red & 1 & 2 & 7 & 10 & 12 & 13 & 14 & 16 & 18 & 20 & 21 \\
\hline
Blue & 3 & 4 & 5 & 6 & 8 & 9 & 11 & 15 & 17 & 19 &\\
\hline
\end{tabular}
\end{center}
Denote an edge length as a \textit{red-length} or a \textit{blue-length} if it corresponds to the colour red or blue respectively in the table above.


\vspace{0.2cm}
Since this colouring is defined solely using the lengths of each edge, then this colouring is cyclic over all vertices, so we will denote the graph as $\Cyc(43)$. 
\end{defn}

For illustration purposes, the following diagram shows the colours of the edges of length $1, 7, 11$ of $\Cyc(43)$.

\begin{center}
\includegraphics[scale=0.3]{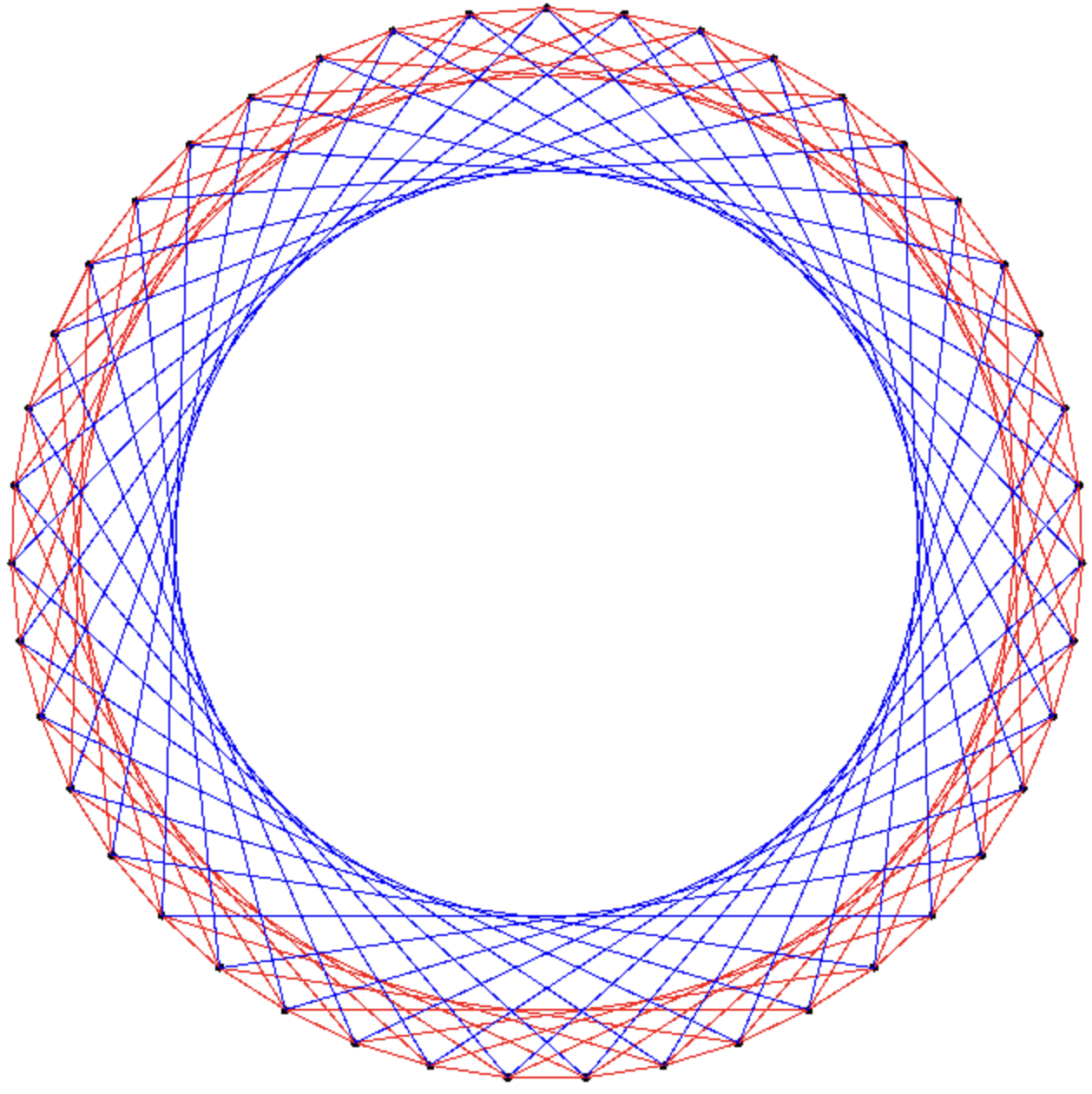}
\end{center}
\vspace{.5cm}
 The following proposition and its converse appears in Exoo's paper \cite{Exoo}. We prove the converse in Section 3 that the only red $K_5$'s in $\Cyc(43)$ are the ones listed below. The proof of the proposition was omitted in the paper because it was trivial but we include it here for completeness.

\begin{prop}\label{2.1} For all integers $i$ such that $0 \leq i \leq 42$, the $5$-tuple of vertices ${(i, i + 1, i + 2, i + 22, i + 23)}$ forms a red $K_5$ in $\Cyc(43)$. This yields $43$ distinct red $K_5$'s.
\end{prop}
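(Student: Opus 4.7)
The plan has two parts: showing that each $5$-tuple spans a red $K_5$, and showing that the $43$ tuples are distinct. For the first, I would exploit the cyclic symmetry of $\Cyc(43)$. Since the colour of an edge depends only on its length, the map $v \mapsto v + 1 \pmod{43}$ is a colour-preserving automorphism, and it suffices to treat $i = 0$: verify that the five vertices $\{0, 1, 2, 22, 23\}$ span a red $K_5$. I would then enumerate the $\binom{5}{2} = 10$ edges, compute each length using the minor arc definition $\dis(a,b) \in \{1,\dots,21\}$, and check it against the red-length table $\{1,2,7,10,12,13,14,16,18,20,21\}$. The resulting multiset of lengths turns out to be $\{1,1,1,2,20,20,21,21,21,21\}$, every element of which is a red length, so the $K_5$ is monochromatic red.

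For distinctness of the $43$ tuples $S_i = \{i, i+1, i+2, i+22, i+23\}$, I would observe that $\mathbb{Z}/43\mathbb{Z}$ acts on $5$-element subsets by translation, and the $S_i$ form a single orbit under this action. The stabiliser of $S_0$ is a subgroup of $\mathbb{Z}/43\mathbb{Z}$; since $43$ is prime, the only subgroups are the trivial group and the full group, and the full group cannot stabilise a proper subset of size $5$. Hence the stabiliser is trivial and the orbit has exactly $43$ elements, giving $43$ distinct red $K_5$'s.

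There is no real obstacle here, which is why the original paper dismissed the proposition as trivial. The only care required is handling the ``minor arc'' convention correctly, for instance recognising that $\dis(0,22) = 21$ rather than $22$ because one goes the short way round the circle. This is a sanity check on the computation rather than a conceptual difficulty.
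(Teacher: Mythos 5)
Your proposal is correct, and the main verification is identical to the paper's: reduce to $i=0$ by cyclic symmetry, enumerate the ten edge lengths (your multiset $\{1,1,1,2,20,20,21,21,21,21\}$ matches the paper's list exactly), and check each against the red-length table. Where you diverge is in proving distinctness of the $43$ tuples. The paper gives a combinatorial argument: in the set $\{i, i+1, i+2, i+22, i+23\}$, the vertex $i+1$ is the unique vertex both of whose circle-neighbours also lie in the set, so the set determines $i$ and the map $i \mapsto S_i$ is injective. You instead invoke orbit--stabiliser: the stabiliser of $S_0$ in $\mathbb{Z}/43\mathbb{Z}$ is a subgroup, and since $43$ is prime it is trivial or everything; the full group cannot fix a $5$-element set, so the orbit has exactly $43$ elements. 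Both arguments are sound. Yours is shorter and needs no inspection of the particular shape of the tuple, but it leans on the primality of $43$; the paper's argument is more elementary, works for any modulus, and as a bonus exhibits a canonical labelling of each red $K_5$ by its distinguished middle vertex, which is the sort of structural information that gets reused later (e.g.\ in Lemma 1.1, where one counts how many of these $K_5$'s contain a given vertex).
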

\begin{proof}
We will check for all $0 \leq i \leq 42$ that the lengths of the edges between all ten pairs of vertices in each of the $K_5$'s are red-lengths. Below are the edge lengths in the $i=0$ case.
\begin{align*}
&\dis(0, 1) = 1.\\
&\dis(0, 2) = 2.\\
& \dis(0, 22) = 21.\\
&\dis(0, 23) = 20.\\
&\dis(1, 2) = 1.\\
&\dis(1, 22) = 21.\\
&\dis(1, 23) = 21.\\
&\dis(2, 22) = 20.\\
&\dis(2, 23) = 21.\\
&\dis(22, 23) = 1.\\
\end{align*}
Since all the edge lengths are either $1, 2, 20$ or $21$ (which are all red-lengths), then this subgraph is a red $K_5$. By symmetry, the edge lengths of the pairs of vertices will be the same for the $K_5$'s for all other values of $i$, so we obtain a red $K_5$ for every value of $i$.\\

For each value of $i$, since vertex $i+1$ is the only vertex in the corresponding $K_5$ whose adjacent vertices are also in the $K_5$, then this shows that the $K_5$'s for each value of $i$ are unique. Hence, we have $43$ distinct $K_5$'s.

\end{proof}

It is also stated in Exoo's paper that there are no blue $K_5$'s in $\Cyc(43)$. We provide a proof in Section 4.

\subsection{Using $\Cyc(43)$ to colour $K_{42}$}

Here, we recall the definition of Exoo's graph and introduce some notation.

\begin{defn}[$\Exoo(42)$]
Start with $\Cyc(43)$, delete the vertex $0$ (which in turn removes all edges connected to it) and change the colours of the following sixteen edges.

$$\e(4,5) \qquad \e(13,14) \qquad \e(23,24) \qquad \e(39,40)$$
$$\e(5,6) \qquad \e(14,15) \qquad \e(24,25) \qquad \e(40,41)$$
$$\e(6,7) \qquad \e(15,16) \qquad \e(30,31) \qquad \e(41,42)$$
$$\e(7,8) \qquad \e(16,17) \qquad \e(33,34) \qquad \e(11,32)$$
\\
We will denote the resulting graph by $\Exoo(42)$.
\end{defn}
Note that the colour changes are all from red to blue because they involve fifteen edges of the red-length $1$ and one edge of the red-length $21$.\\

The following Lemma is proved in Exoo's paper.
\begin{lem}\label{colourChanges43RedK5Removed} (LC, VY)
The 43 red $K_5$'s of Proposition \ref{2.1} are no longer red $K_5$'s in $\Exoo(42)$ due to the colour changes and the removed vertex $0$.
\end{lem}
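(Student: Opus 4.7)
The plan is to partition the $43$ red $K_5$'s of Proposition \ref{2.1} into two groups according to how each one is destroyed in $\Exoo(42)$: either its vertex set contains the removed vertex $0$, or at least one of its ten edges lies in the list of sixteen recoloured edges.

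First I would identify those indices $i$ for which the tuple $(i, i+1, i+2, i+22, i+23)$ contains vertex $0$. Setting each coordinate congruent to $0 \pmod{43}$ in turn, this is precisely $i \in \{0, 20, 21, 41, 42\}$, so five of the $43$ $K_5$'s are killed immediately by the deletion of vertex $0$.

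For the remaining $38$ indices, I would show that the $K_5$ indexed by $i$ contains at least one recoloured edge. The ten edges of this $K_5$ fall into four length classes: one length-$2$ edge, three length-$1$ edges (namely $(i,i+1)$, $(i+1,i+2)$, $(i+22,i+23)$), two length-$20$ edges, and four length-$21$ edges. Since fifteen of the recoloured edges have length $1$ and one has length $21$, I only need to match each recoloured edge against edges of the corresponding class. For a recoloured length-$1$ edge $(a, a+1)$, a direct check shows it appears in exactly the three $K_5$'s indexed by $i \in \{a-1, a, a-22\} \pmod{43}$. For the sole recoloured length-$21$ edge $(11,32)$, matching $\{11, 32\}$ against each of $(i, i+22)$, $(i+1, i+22)$, $(i+1, i+23)$, $(i+2, i+23)$ (working modulo $43$) gives $i \in \{9, 10, 31, 32\}$. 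Compiling these lists from all sixteen recoloured edges, I would verify that every $i \in \{1, \ldots, 19\} \cup \{22, \ldots, 40\}$ is hit at least once.

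The obstacle is entirely bookkeeping rather than conceptual: one must exhaust all $38$ non-trivial indices and check that the listed recoloured edges really do cover each. The subtlety worth flagging is that the indices $i \in \{9, 10, 31\}$ are not hit by any of the fifteen length-$1$ recoloured edges and are reached \emph{only} through the length-$21$ edge $(11,32)$; this explains why that single non-unit-length colour change was necessary in Exoo's construction. A clean presentation would display a table listing, for each of the $43$ values of $i$, either the note ``$0 \in K_5$'' or one specific recoloured edge belonging to the $K_5$, reducing the verification to inspection.
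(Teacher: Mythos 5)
Your proposal is correct and is essentially the paper's argument: the paper likewise removes the five $K_5$'s with $i\in\{0,20,21,41,42\}$ via the deleted vertex and then exhibits, for each of the remaining $38$ indices, a recoloured edge (using $\e(11,32)$ precisely for $i=9,10,31$). The only difference is presentational — you organise the covering check edge-by-edge (each recoloured $\e(a,a+1)$ hits $i\in\{a-1,a,a-22\}$) whereas the paper lists it index-by-index — which is the same verification transposed.
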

\begin{proof}
From Proposition \ref{2.1} we see that each vertex in $\Cyc(43)$ is part of five of the $43$ red $K_5$'s. Thus, only $38$ of these red $K_5$'s will remain after deleting vertex $0$. \\

Recall that the $38$ red $K_5$'s consist of the vertices $\{i, i + 1, i + 2, i + 22, i + 23\}$ for all integers $i$ where $0 \leq i \leq 42$, but excluding the values of ${i=0, 20, 21, 41, 42}$ because vertex $0$ has been deleted. We now show that the colour changes cause all $38$ of the red $K_5$'s to have an edge turn from red to blue. \\

\begin{itemize}
    \item For all $i \in \{4,5,6,7,13,14,15,16,23,24,30,33,39,40\}$, the edge $\e(i,i+1)$ changes colour.
    \item For all $i \in \{3, 12, 22, 29, 32, 38 \}$, the edge $\e(i+1,i+2)$ changes colour.
    \item For all $i \in \{ 1, 2, 8, 11, 17, 18, 19, 25, 26, 27, 28, 34, 35, 36, 37 \}$, the edge ${\e(i+22,i+23)}$ changes colour.
    \item For $i=9$, the edge $\e(i+2, i+23)$ (i.e. $\e(11,32)$) changes colour.
    \item For $i=10$, the edge $\e(i+1, i+22)$ (i.e. $\e(11,32)$) changes colour.
    \item For $i=31$, the edge $\e(i+1, i+23)$ (i.e. $\e(11,32)$) changes colour.
\end{itemize}
Hence, the colour changes and the removed vertex $0$ disrupt all $43$ of the red $K_5$'s from Proposition \ref{2.1}.

\end{proof}


If we combine the above Lemma with a proof that there are no other red $K_5$'s in $\Cyc(43)$ apart from the $43$ previously listed ones, then we can conclude there are no red $K_5$'s in $\Exoo(42)$. This is done in Section 3.

\section{Helpful observations}

\begin{lem} In $\Cyc(43)$ and $\Exoo(42)$, the edge joining vertices $a$ and $b$ is blue if the difference $|a-b|$ is one of the following:
$$3, 4, 5, 6, 8, 9, 11, 15, 17, 19, 24, 26, 28, 32, 34, 35, 37, 38, 39, 40.$$
In $\Cyc(43)$, all other edges are red. In $\Exoo(42)$, all other edges are red unless the difference is $1, 21, 22$ or $42$ in which case the specific edge needs to be checked against the colour change table.
\end{lem}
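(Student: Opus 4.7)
My plan is to translate the length-based colouring rule of Definition 1.1 into a rule expressed directly in terms of the raw difference $|a-b|$, and then to account for how the sixteen recolourings in the construction of $\Exoo(42)$ perturb this translation. The argument is essentially a bookkeeping exercise rather than a substantive one.

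Recall that in $\Cyc(43)$ the colour of $\e(a,b)$ depends only on $\dis(a,b)$, the unique $m \in \{1, \ldots, 21\}$ with $a - b \equiv \pm m \pmod{43}$. For distinct $a, b \in \{0, 1, \ldots, 42\}$ we have $\dis(a, b) = \min(|a-b|, 43 - |a-b|)$, so a given length $m$ occurs precisely when $|a-b| = m$ or $|a-b| = 43 - m$. Applying this to the ten blue-lengths $3, 4, 5, 6, 8, 9, 11, 15, 17, 19$ and forming the union with their complements $40, 39, 38, 37, 35, 34, 32, 28, 26, 24$ yields exactly the twenty values listed in the lemma; all other differences in $\{1, \ldots, 42\}$ correspond to red-lengths. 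This settles the claim for $\Cyc(43)$.

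For $\Exoo(42)$ I would next verify that among the sixteen recoloured edges, fifteen have the form $\e(i,i+1)$ and hence length $1$, while the remaining edge $\e(11,32)$ has length $\min(21, 43-21) = 21$. Consequently the recolouring only touches edges of length $1$ or $21$, i.e.\ edges whose difference lies in $\{1, 42\}$ (length $1$) or $\{21, 22\}$ (length $21$). For every other difference the $\Cyc(43)$ rule is preserved edge-for-edge, and the same colour-by-difference table therefore still applies in $\Exoo(42)$.

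The only step that really needs explicit attention is confirming that no recoloured edge has a length other than $1$ or $21$; this is already observed in the remark immediately after the definition of $\Exoo(42)$, so it requires no new work. Beyond that, the whole proof is simply a translation between the cyclic length (which controls the colour) and the linear difference (in terms of which the lemma is phrased), and the modest care needed to remember that each length $m \in \{2, \ldots, 20\}$ arises from two distinct values of $|a-b|$.
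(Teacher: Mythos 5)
Your proposal is correct and follows essentially the same route as the paper: identify the twenty listed differences as the ten blue-lengths together with their complements to $43$, and then note that the sixteen recoloured edges all have length $1$ or $21$, hence difference $1$, $42$, $21$ or $22$. No gaps.
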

\begin{proof} The first ten differences are equal to all the blue-lengths. Also, observe that when the latter ten differences are subtracted from $43$, we obtain all the blue-lengths, so these differences (which are distances between pairs of vertices along the major arc) also correspond to blue edges.\\

By definition, all other edges of $\Cyc(43)$ are red. In $\Exoo(42)$, colour changes only occur for certain edges of length $1$ or $21$. This corresponds to the difference $|a-b|$ being either $1, 21, 22$ or $42$ (considering distances along the minor or major arc).

\end{proof}

\begin{defn} Consider a fixed edge $\e(a, b)$ in $\Cyc(43)$, We say that two vertices $u,v$ are \textit{symmetric} with respect to $\e(a, b)$ if 
$$
\ModCmp{a-u}{v-b}{43}
$$
(or equivalently $\ModCmp{a-v}{u-b}{43}$. We say that two sets of vertices $U,V$ of equal size are \textit{symmetric} about $\e(a,b)$ if all the vertices in $U$ can be paired with all the vertices in $V$ such that the vertices in each pair are symmetric with respect to $\e(a,b)$.
\end{defn}

\begin{prop}[Symmetry in Colourings] In $\Cyc(43)$, suppose that we are given a pair of vertices ($a,b$) and another pair of vertices ($x,y$) satisfying
$$\ModCmp{x-b}{a-y}{43}$$
(or in other words vertices $x, y$ are symmetric with respect to $\e(a, b)$). Then the edge $\e(x,a)$ has the same colour as $\e(y,b)$ and the edge $\e(x,b)$ has the same colour as $\e(y,a)$. In particular, if both $\e(x,a)$ and $\e(x, b)$ edges are red (resp. blue), then so are the edges $\e(y,a)$ and $\e(y,b)$ and vice versa.
\end{prop}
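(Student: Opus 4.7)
The plan is to reduce each claimed equality of colour to an equality of edge length, exploiting the fact that in $\Cyc(43)$ the colour of an edge is a function solely of its length. The first step would be to rewrite the hypothesis $\ModCmp{x-b}{a-y}{43}$ in the equivalent symmetric additive form $\ModCmp{x+y}{a+b}{43}$. This reformulation is useful because it makes transparent that the roles of the pairs $(x,a)$ and $(y,b)$, and of $(x,b)$ and $(y,a)$, can be interchanged, and it immediately yields the two companion congruences $\ModCmp{x-a}{b-y}{43}$ and $\ModCmp{x-b}{a-y}{43}$ (the second being the hypothesis itself).

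Next I would recall from the definition of $\dis$ that $\dis(u,v)$ is the unique $m\in\{1,\dots,21\}$ with $\ModCmp{u-v}{\pm m}{43}$, so that two edges whose endpoint differences are negatives of each other modulo $43$ have equal length. Applying this to $\ModCmp{x-a}{-(y-b)}{43}$ gives $\dis(x,a)=\dis(y,b)$, and applying it to $\ModCmp{x-b}{-(y-a)}{43}$ gives $\dis(x,b)=\dis(y,a)$. Since, in $\Cyc(43)$, the colour assignment depends only on length via the table in Definition 1.1, these two length equalities translate directly into the two claimed colour equalities $\e(x,a)\sim\e(y,b)$ and $\e(x,b)\sim\e(y,a)$.

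The final \emph{in particular} clause is then a one-line case split: if both $\e(x,a)$ and $\e(x,b)$ are red, then by the equalities just established $\e(y,b)$ and $\e(y,a)$ must also be red; the blue case is identical, and the converse direction is the same statement with $x$ and $y$ interchanged, which is legitimate because the hypothesis $\ModCmp{x-b}{a-y}{43}$ is symmetric in $(x,y)\leftrightarrow(y,x)$ simultaneously with $(a,b)\leftrightarrow(b,a)$.

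I do not anticipate any serious obstacle, as the argument is essentially an unpacking of definitions. The only subtlety worth flagging is that the proposition is stated for $\Cyc(43)$ rather than $\Exoo(42)$, so colour really is a pure function of length and no exceptional colour-change edges can interfere with the final step; were one to attempt an analogous symmetry statement inside $\Exoo(42)$, one would need to check by hand which of the sixteen modified edges might be mapped to unmodified ones and vice versa.
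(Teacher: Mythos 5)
Your proposal is correct and follows essentially the same route as the paper's own proof: rearrange the hypothesis $\ModCmp{x-b}{a-y}{43}$ into $\ModCmp{x-a}{b-y}{43}$, observe that equal (or negated) differences modulo $43$ give equal edge lengths, and conclude via the fact that colour in $\Cyc(43)$ is a function of length alone. Your additional remarks on the additive form $\ModCmp{x+y}{a+b}{43}$ and on why the argument would not transfer verbatim to $\Exoo(42)$ are accurate but not needed for the proof itself.
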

\begin{proof}It suffices to show the differences between $x$ and $a,b$ are the same as the differences between $y$ and $b,a$ respectively (in modulo $43$) since equal differences imply that the edge lengths are equal and hence have the same colour.\\

We are already given that ${\ModCmp{x-b}{a-y}{43}}$. By rearranging this, we obtain ${\ModCmp{x-a}{b-y}{43}}$, so we have both of the required equalities in differences.
\end{proof}

This proposition can be applied to a set of vertices $X$ symmetric to a set of vertices $Y$ about a fixed edge $\e(a,b)$. A practical consequence of this symmetry is that it reduces the effort required to determine colours of edges. For example, if the colour of the edge $\e(x, a)$ is known for some $x \in X$, then if $y \in Y$ is the symmetric counterpart of $x$, the colour of $\e(y, b)$ can immediately be deduced to be the same colour as $\e(x,a)$.\\

In the rest of this article, we will use diagrams like the one below. Here, we explain some of our conventions.
\begin{verbatim}

0        1         2         3         4  
1234567890123456789012345678901234567890123  
o  xxxx xx x   x x x    x x x   x xx xxxx 
  o  xxxx xx x   x x x    x x x   x xx xxxx 
     EE E  E       E      E E     E  E EE
      
\end{verbatim}
The first two rows in the diagram represent the $43$ vertices with the tens digit on the first row and the units digit on the second row. \\

Given a fixed colour (blue or red depending on the context), the `o' represents the vertex in question and the x's represent all the vertices in $\Cyc(43)$ which form an edge of the given colour with the vertex corresponding to `o', \\

In the example diagram above, the fixed colour is blue. The third row shows that the vertices which form blue edges with vertex $1$ in $\Cyc(43)$ are vertices 
$${ \{4, 5, 6, 7, 9, 10, 12, 16, 18, 20, 25, 27, 29, 33, 35, 36, 38, 39, 40, 41\} }.$$

The fourth row shows that the vertices in $\Cyc(43)$ which form blue edges with vertex $3$ are vertices 
$${ \{ 6, 7, 8, 9, 11, 12, 14, 18, 20, 22, 27, 29, 31, 35, 37, 38, 40, 41, 42, 43 \}}.$$ 
The E's in the fifth row show overlaps in the x's in the third and fourth rows.\\

One helpful observation is that because we have a cyclic colouring in $\Cyc(43)$, the relative positions of the x's and `o' are always fixed and we simply have to translate a row cyclically to obtain the rows corresponding to different vertices labelled with `o'. Also, the E's give us the information that the vertices that form blue edges with vertices $1$ and $4$ are 
$${ \{ 6, 7, 9, 12, 18, 20, 27, 29, 35, 38, 40, 41 \}}.$$ 
Notice that we can apply our symmetry properties from earlier to see that the first six of these vertices are symmetric with the last six vertices about $\e(1, 3)$. \\

We also see in the rows of the diagram that for the blue edges marked by x's, there are two blocks of four consecutive vertices and two blocks of pairs of consecutive vertices. This leads to the following observation.


\begin{lem}\label{8verticescyclic}Let $a$ be an integer such that $0 \leq a \leq 42$. There are exactly eight vertices which form blue edges with both vertex $a$ and $a+1$ in $\Cyc(43)$. Moreover, four of these vertices are symmetric with the other four over edge $\e(a, a+1)$. These eight vertices have the form
$$a+4,a+5,a+6,a+9, a-8, a-5, a-4, a-3 \ (\bmod\; 43).$$
\end{lem}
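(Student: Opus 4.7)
The plan is to reduce everything to a single case using the cyclic symmetry of $\Cyc(43)$ and then apply the characterisation of blue edges by differences from the previous lemma. Concretely, the translation $v \mapsto v - a \pmod{43}$ is a colour-preserving automorphism of $\Cyc(43)$ (since it preserves all edge lengths) and sends the pair $\{a, a+1\}$ to $\{0, 1\}$. It therefore suffices to enumerate the vertices $v$ that are blue-adjacent to both $0$ and $1$, and to translate the answer back by $+a$.

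For this enumeration I would use the previous lemma to say that $v$ is blue-adjacent to $0$ iff $v \pmod{43}$ lies in the twenty-element ``blue difference'' set
\[
D = \{3, 4, 5, 6, 8, 9, 11, 15, 17, 19, 24, 26, 28, 32, 34, 35, 37, 38, 39, 40\},
\]
and similarly $v$ is blue-adjacent to $1$ iff $v - 1 \pmod{43}$ lies in $D$. So the required vertices correspond exactly to consecutive pairs $(d, d+1)$ sitting inside $D$, read cyclically modulo $43$. A direct scan of $D$ (where no wrap-around pair occurs, since $42 \notin D$) turns up exactly the eight consecutive pairs $(3,4), (4,5), (5,6), (8,9), (34,35), (37,38), (38,39), (39,40)$, giving the eight vertices $\{4, 5, 6, 9, 35, 38, 39, 40\}$. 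Translating back by $+a$ yields precisely the stated list $\{a+4, a+5, a+6, a+9, a-8, a-5, a-4, a-3\}$ modulo $43$.

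For the symmetry claim, I would unfold the definition: two vertices $u, v$ are symmetric about $\e(0, 1)$ iff $u + v \equiv 1 \pmod{43}$. The involution $v \mapsto 1 - v \pmod{43}$ has its unique fixed point in $\mathbb{Z}/43\mathbb{Z}$ at $v = 22$, which is not among our eight vertices, so it partitions them into four pairs. A direct check gives $4 \leftrightarrow 40$, $5 \leftrightarrow 39$, $6 \leftrightarrow 38$, $9 \leftrightarrow 35$, exhibiting the required symmetric partition into two blocks of four. The ``main obstacle'' here is purely bookkeeping: making sure the scan of $D$ correctly accounts for the cyclic structure and that no consecutive pair is overlooked.
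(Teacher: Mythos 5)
Your proposal is correct and follows essentially the same strategy as the paper: reduce to a single base pair by cyclic symmetry, enumerate the common blue neighbours using the blue-difference characterisation, and verify the pairing about the edge. The only cosmetic difference is that you perform the enumeration algebraically (as consecutive pairs inside the difference set $D$, with the wrap-around correctly ruled out) and phrase the symmetry as $u+v\equiv 1 \pmod{43}$, whereas the paper reads both off a diagram of overlapping blue neighbourhoods.
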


\begin{proof} Without loss of generality, we can assume by cyclic symmetry that $a=1$. In the below diagram, the fixed colour is set to blue.

\begin{verbatim}

0        1         2         3         4  
1234567890123456789012345678901234567890123  
o  xxxx xx x   x x x    x x x   x xx xxxx 
 o  xxxx xx x   x x x    x x x   x xx xxxx
    EEE  E                         E  EEE
 
\end{verbatim}
The diagram above shows that the vertices that form blue edges with both vertex $1$ and $2$ are ${ \{ 5, 6, 7, 10, 36, 39, 40, 41 \}}$. These eight vertices are in the form listed in the lemma.\\

To check the symmetry, we observe that the positions of vertices ${ a+4, a+5, a+6, a+9}$ relative to vertex $a+1$ are $3, 4, 5, 8$ steps clockwise respectively and the positions of vertices ${a-3, a-4, a-5, a-8}$ relative to vertex $a$ are $3, 4, 5, 8$ steps anticlockwise respectively.

\end{proof}

\section{Checking red $K_5$'s}
We first prove that the only red $K_5$'s in $\Cyc(43)$ are the $43$ ones claimed by Exoo. By cyclic symmetry, without loss of generality we only have to consider the red $K_5$'s that include vertex $1$. Note that the distances between successive vertices in a red $K_5$ (going clockwise around the circle) add up to $43$ and there are five such distances. Hence, by the pigeonhole principle, at least one of the edge lengths in the red $K_5$ must be at most $8$. \\

Hence, we will also assume without loss of generality that there is a vertex $a$ clockwise of $1$ in the subgraph such that $\dis(a, 1) \leq 8$ and that $\dis(a, 1)$ is the smallest distance between any pair of vertices in the subgraph. Call a red $K_5$ \textit{standard} if it satisfies all these aforementioned conditions.

\begin{lem}\label{no12red}(VY, CK) Suppose that vertices $1$ and $2$ belong to a standard red $K_5$ in $\Cyc(43)$. Then the subgraph must be formed from one of the three $5$-tuples of vertices ${(1,2,3,23,24)}$,  ${(0,1,2,22,23)}$, or  ${(22,23,24,1,2)}$. These subgraphs correspond to $i=1,0,22$ respectively in Proposition \ref{2.1}.
\end{lem}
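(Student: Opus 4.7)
The plan is to pin down $a$ using the standard hypothesis, list the candidate vertices for the remaining three vertices of the $K_5$, and then enumerate red triangles among them. Since $1$ and $2$ lie in the $K_5$ and $\dis(1,2)=1$, the minimum pairwise distance in the $K_5$ equals $1$; the standard hypothesis that $\dis(a,1)$ is this minimum and that $a$ is clockwise of $1$ then forces $a=2$. Hence the $K_5$ is $\{1,2\}\cup T$ for some $3$-element set $T$ whose induced subgraph is a red triangle and each of whose vertices forms red edges with both $1$ and $2$.

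Next I would identify the candidate set
$$S=\{v\neq 1,2 : \e(v,1)\text{ and }\e(v,2)\text{ are both red}\}.$$
Listing the $22$ vertices at red-length distance from $1$ using the colour table, and intersecting with the cyclic shift by $1$ of that list, yields
$$S=\{0,3,14,15,22,23,24,31,32\}.$$
The same can be read off a diagram in the style introduced after the Symmetry in Colourings proposition, with the two `o' rows placed at $1$ and $2$ and the `x's marking red-length positions instead of blue ones.

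It then remains to enumerate red triangles on $S$. Applying the Symmetry in Colourings proposition to $\e(1,2)$ shows that the involution $u\mapsto 3-u \pmod{43}$ preserves edge colour, acts on $S$ by swapping $\{0,3\}$, $\{14,32\}$, $\{15,31\}$, $\{22,24\}$ and fixing $23$, which cuts the casework roughly in half. I would then tabulate the $\binom{9}{2}=36$ pairwise distances inside $S$, discard those of blue-length, and read off the triangles in the resulting small red graph. The only survivors are $\{0,22,23\}$, $\{22,23,24\}$, and $\{3,23,24\}$, giving the three $5$-tuples of the statement and corresponding to $i=0,22,1$ in Proposition \ref{2.1} respectively.

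The main obstacle is this triangle enumeration, but it is short because $S$ clusters into four short intervals $\{0,3\}$, $\{14,15\}$, $\{22,23,24\}$, $\{31,32\}$, so many cross-cluster pairs immediately realise blue-lengths (for example $\dis(14,22)=8$, $\dis(15,23)=8$, $\dis(22,31)=9$, $\dis(0,32)=11$), eliminating most candidate triangles at a glance; the symmetry above disposes of the remaining pairs.
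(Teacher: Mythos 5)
Your proposal is correct and follows essentially the same route as the paper: both reduce to the same candidate set $S=\{0,3,14,15,22,23,24,31,32\}$ of common red neighbours of $1$ and $2$, exploit the symmetry of $S$ about $\e(1,2)$, and finish by a finite search for red triangles inside $S$ (I checked your distance claims and the three surviving triangles, and they are right). The only difference is organisational — you tabulate all $\binom{9}{2}$ pairwise distances at once, whereas the paper runs vertex-by-vertex casework — so this is the same argument in a slightly more compact form.
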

\begin{proof}
In the following diagrams for this proof, the fixed colour is set to red.
\begin{centerverbatim}

0        1         2         3         4
1234567890123456789012345678901234567890123
oxx    x  x xxx x x xxxx x x xxx x  x    xx
xoxx    x  x xxx x x xxxx x x xxx x  x    x
  E          EE      EEE      EE          E
  
\end{centerverbatim}
The diagram shows that the other three vertices that can be in the standard red $K_5$ must be within 
$${ S = \{ 0, 3, 14, 15, 22, 23, 24, 31, 32 \} }$$ 
(noting that vertex $0$ and $43$ are the same).\\

\textbf{Case 1 (RK): }Vertex $3$ is in the subgraph. 
\begin{centerverbatim}

0        1         2         3         4
1234567890123456789012345678901234567890123
oxx    x  x xxx x x xxxx x x xxx x  x    xx
xoxx    x  x xxx x x xxxx x x xxx x  x    x
xxoxx    x  x xxx x x xxxx x x xxx x  x    
              E       EE       E 
              
\end{centerverbatim}
The above diagram shows that the remaining vertices in the subgraph must be within ${ \{ 15, 23, 24, 32 \} }$. The tuple ${(1,2,3,23,24)}$ forms a possible subgraph (and is listed in the lemma). \\


The distances from vertex $23$ to vertices ${15, 24, 32}$ are $8, 1, 9$ respectively, of which only $1$ is a red-length. Thus, if vertex $23$ is in the subgraph, then so must $24$, which gives the tuple mentioned previously. \\

The distances from vertex $24$ to vertices ${15, 23, 32}$ are $9, 1, 8$ respectively. Similarly to before, if vertex $24$ is in the subgraph, then so must $23$, again giving the same tuple.\\

The only remaining choice is to include vertices $15, 23$ in the subgraph, but this fails because the edge between them has a blue-length of $8$.\\


\textbf{Case 2 (ML): }Vertex $14$ is in the subgraph.
\begin{centerverbatim}

0        1         2         3         4
1234567890123456789012345678901234567890123
oxx    x  x xxx x x xxxx x x xxx x  x    xx
xoxx    x  x xxx x x xxxx x x xxx x  x    x
xx x  x    xxoxx    x  x xxx x x xxxx x x x
              E        E       E          E
              
\end{centerverbatim}
The diagram above shows that the remaining vertices in the subgraph must be within ${ \{ 0, 15, 24, 32 \} }$. Observe that the edge lengths for the pairs of vertices in $\{ 0, 15, 24\}$ are the blue-lengths ${15, 9, 19}$. Thus, at most one of these three vertices can be used and also vertex $32$ must be used.\\

However, the distances from vertex $32$ to vertices ${0, 15, 24}$ are ${11, 17, 8}$ respectively, which are all blue-lengths. Hence, there are no standard red $K_5$'s in this case.\\


\textbf{Case 3 (MLam): }Vertex $15$ is in the subgraph.
\begin{centerverbatim}

0        1         2         3         4
1234567890123456789012345678901234567890123
oxx    x  x xxx x x xxxx x x xxx x  x    xx
xoxx    x  x xxx x x xxxx x x xxx x  x    x
xxx x  x    xxoxx    x  x xxx x x xxxx x x 
  E          E       E        E
              
\end{centerverbatim}
The diagram above shows that the remaining vertices in the subgraph must be within ${ \{ 3, 14, 22, 31 \} }$. However, the distances between each pair of these four vertices are ${11, 19, 15, 8, 17, 9}$ which all all blue-lengths. Hence, there are no standard red $K_5$'s in this case.\\

\textbf{Case 4 (MS): }Vertex $22$ is in the subgraph.
\begin{centerverbatim}

0        1         2         3         4
1234567890123456789012345678901234567890123
oxx    x  x xxx x x xxxx x x xxx x  x    xx
xoxx    x  x xxx x x xxxx x x xxx x  x    x
xx x x xxx x  x    xxoxx    x  x xxx x x xx
              E       EE       E          E
              
\end{centerverbatim}
The diagram above shows that the remaining vertices in the subgraph must be within ${ \{ 0, 15, 23, 24, 32 \} }$. The distances from vertex $0$ to vertices ${15, 23, 24, 32}$ are ${15, 20, 19, 11}$ respectively, of which only $20$ is a red-length, so if vertex $0$ is included, then the subgraph must be formed by the tuple ${(0, 1, 2, 22, 23)}$ (which is mentioned in the lemma).\\

Now assume that vertex $0$ is not included. The distances from vertex $23$ to vertices ${15, 24, 32}$ are ${8, 1, 9}$ respectively, of which only $1$ is a red-length, so if vertex $23$ is included, then the subgraph must be formed by the tuple ${(22, 23, 24, 1, 2)}$ (which is mentioned in the lemma).\\

Now assume that vertices $0, 23$ are not included. Then we must choose two vertices from ${ \{ 15, 24, 32 \}}$. Since the distances between the pairs of these three vertices are ${9,17,8}$ which are blue-lengths, then there are no standard red $K_5$'s in this case.\\

\textbf{Case 5: }Vertex $23$ is in the subgraph. 
\begin{centerverbatim}

0        1         2         3         4
1234567890123456789012345678901234567890123
oxx    x  x xxx x x xxxx x x xxx x  x    xx
xoxx    x  x xxx x x xxxx x x xxx x  x    x
xxx x x xxx x  x    xxoxx    x  x xxx x x x
  E                  E E                  E
              
\end{centerverbatim}
The diagram above shows that the remaining vertices in the subgraph must be within ${ \{ 0, 3, 22, 24 \} }$. Note that we have already covered the possibility of vertices $3$ or $22$ being included earlier, so then we only have to consider the possibility of the remaining two vertices being $0$ and $24$, which fails because the distance between them is $19$, which is a blue-length.\\



\textbf{Case 6: }At least one of the vertices $0, 24, 31, 32$ are in the subgraph. Note that the vertices $0, 24, 31, 32$ are symmetric to the vertices $3, 22, 15, 14$ respectively about $\e(1,2)$. Thus, by symmetry the arguments used in Cases 1, 2, 3 and 4 can be applied. \\

In these cases the only valid standard $K_5$'s found were those formed from the tuples ${(1,2,3,23,24)}$, ${(0, 1, 2, 22, 23)}$ and ${(22, 23, 24, 1, 2)}$. Therefore, the only standard red $K_5$'s in Case 6 must be formed by considering the symmetrical subgraphs of these three tuples about $\e(1,2)$, which gives us the tuples ${(0, 1, 2, 22, 23)}$, ${(1, 2, 3, 23, 24)}$ and ${(22, 23, 24, 1, 2)}$ respectively, so we do not obtain any more new standard red $K_5$'s. \\

Hence, we have covered all cases, where we showed that the only standard red $K_5$'s are formed by the three tuples listed in the lemma.
\end{proof}

\begin{lem}\label{no13red}(AQ,GC) There are no standard red $K_5$'s in $\Cyc(43)$ that contain vertices $1$ and $3$ but not $2$.
\end{lem}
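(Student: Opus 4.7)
The plan is to mimic the diagram-driven casework of Lemma~\ref{no12red}. I would first draw the analogous diagram with fixed colour red, displaying the red neighbours of vertex $1$ in the third row and those of vertex $3$ (a cyclic shift by $2$) in the fourth row. Reading off the E's that mark the common red neighbours, and dropping vertex $2$ (excluded by hypothesis), yields a candidate set $C$ of twelve vertices in $\{13, \ldots, 34\}$: any red $K_5$ containing $\{1, 3\}$ must complete using three elements of $C$. Invoking the standardness hypothesis, since $2$ is absent the smallest distance in the subgraph is $\dis(1,3) = 2$, so $a = 3$ and every pair of vertices in the subgraph must be at distance at least $2$; in particular the pair $(23, 24)$, whose distance is $1$, cannot both appear.

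It remains to show that the red graph induced on $C$ contains no triangle (using at most one of $23, 24$). Here I would exploit the reflective symmetry of $C$ about $\e(1, 3)$, which pairs $13 \leftrightarrow 34$, $15 \leftrightarrow 32$, $17 \leftrightarrow 30$, $19 \leftrightarrow 28$, $21 \leftrightarrow 26$, $23 \leftrightarrow 24$. By the Symmetry in Colourings proposition this reflection preserves edge colours, so it is enough to rule out red triangles meeting the ``left'' half $\{13, 15, 17, 19, 21, 23\}$: a right-only triangle would reflect to a left-only triangle and so be detected too. For each such vertex $v$, I would list the red neighbours of $v$ inside $C$ directly from the distance table, and verify that no two of them are themselves joined by a red edge.

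The main obstacle is simply bookkeeping: six cases, each with only three or four candidate red neighbours to cross-check. Because the vertices of $C$ sit roughly every second integer, most pairwise differences land on the blue-lengths $4, 6, 8$, making the red subgraph on $C$ sparse; a typical case (for instance $v = 13$, with red neighbours $\{15, 23, 26, 34\}$ inside $C$) closes in a single line by observing that each of the pairs $(15,23)$, $(15,26)$, $(15,34)$, $(23,26)$, $(23,34)$, $(26,34)$ has a blue-length difference. The conclusion will be that no red triangle exists in $C$, and hence no standard red $K_5$ contains both $1$ and $3$ while omitting $2$.
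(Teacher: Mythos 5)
Your proposal is correct, and it begins exactly as the paper does: the same diagram yields the same candidate set $S=\{13,15,17,19,21,23,24,26,28,30,32,34\}$ of common red neighbours of $1$ and $3$, and both arguments halve the work via the reflection about $\e(1,3)$. Where you diverge is the core step. The paper splits $S$ into the odd half $T=\{13,\dots,23\}$ and the even half $U=\{24,\dots,34\}$, observes that all within-half distances lie in $\{2,4,6,8,10\}$ with only $2$ and $10$ red, so no half can contribute three vertices (two red gaps would force a third difference of $4$, $12$ or $20$, impossible within a span of $10$ except for the blue-length $4$), and then eliminates the remaining $2{+}1$ configuration by parity: every cross-half distance is odd, and no two odd red-lengths ($1,7,13,21$) differ by $2$ or $10$. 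You instead show directly that the red graph induced on $S$ is triangle-free, listing the at most four red neighbours in $S$ of each vertex of the left half and checking all pairs, with reflection disposing of right-only triangles; I have verified that each of your six cases closes as claimed. Your route is more computational (about six cases of up to six pair-checks) but entirely elementary and easy to audit; the paper's is shorter and exposes the structural reason ($T$ odd, $U$ even) for the sparsity. Two minor remarks: your use of standardness to exclude the pair $(23,24)$ is valid (with $2$ absent the minimal distance is $\dis(1,3)=2$, forcing $a=3$) but turns out to be unnecessary, since $23$ and $24$ in fact have no common red neighbour in $S$; and applying the reflection to an edge between two reflected vertices $u,v$ is a mild extension of the stated Symmetry in Colourings proposition, which literally covers only edges incident to $1$ or $3$, though it follows from the same one-line congruence $u-v\equiv v'-u' \pmod{43}$.
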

\begin{proof}[Proof by AQ]

\begin{verbatim}

0        1         2         3         4
1234567890123456789012345678901234567890123
oxx    x  x xxx x x xxxx x x xxx x  x    xx 
xxoxx    x  x xxx x x xxxx x x xxx x  x     
 E          E E E E E EE E E E E E        
 
\end{verbatim}

Suppose for the sake of contradiction that there exists a standard red $K_5$ that satisfies the conditions. The above diagram (with fixed colour set to red) shows that apart from the vertex $2$, the vertices that are connected to both $1$ and $3$ by red edges are
$${  S = \{ 13, 15, 17, 19, 21, 23, 24, 26, 28, 30, 32, 34 \}  }.$$
We will partition $S$ into the two sets
\begin{align*}
    T &= \{ 13, 15, 17, 19, 21, 23 \}.\\
    U &= \{ 24, 26, 28, 30, 32, 34 \}.
\end{align*}
Let $V, W$ be the sets of distances between pairs of vertices in $T$ and $U$ respectively. We obtain
$$
V = W = \{ 2, 4, 6, 8, 10 \}.
$$
Note that the only red-lengths in $V, W$ are $2$ and $10$. Suppose that the subgraph contains three vertices ${a, b, c \in T}$ such that ${a < b < c}$. Then $b-a$ and $c-b$ must be either $2$ or $10$, but then the edge length
$$
\dis(a, c) = c-a = (c-b) + (b-a)
$$
would have be either $4, 12$ or $20$, which is a contradiction. Hence, the subgraph must contain at most two vertices from $T$ and using a similar argument, the same must be true for $U$. This forces one of $T, U$ to contain exactly two vertices of the subgraph and the other to contain exactly one vertex. \\

Also, observe that $T, U$ are symmetrical about $\e(1, 3)$, so we can assume without loss of generality that $T$ contains exactly two vertices $a, b$ where $a<b$ and $U$ contains exactly one vertex $c$ from the subgraph.\\

From earlier, we know that $b-a$ must be either $2$ or $10$. Also, note that 
$$
1 \leq c - b < c-a \leq 34-13=21.
$$
Hence, $\dis(c, a) = c-a$ and $\dis(c, b) = c-b$. We therefore obtain
$$
\dis(c, a) - \dis(c, b) = (c-a) - (c-b) = b-a = \dis(a, b) = 2 \text{ or } 10.
$$
By observing the values in $T$ and $U$, notice that $\dis(c, a)$ and $\dis(c, b)$ must both be odd. However, the colour table for $\Cyc(43)$ shows that there are no two odd red-lengths that differ by $2$ or $10$, contradiction.





\end{proof}

\begin{lem}\label{no18red}There are standard no red $K_5$'s in $\Cyc(43)$ that contain vertices $1$ and $8$ and not the vertices $2$ or $3$.

\end{lem}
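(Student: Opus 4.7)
The plan is to follow the structure of Lemmas~\ref{no12red} and~\ref{no13red}: identify the common red neighbors of $1$ and $8$ using a diagram in the style of the previous proofs, and then show that no three of them can extend $\{1,8\}$ to a red $K_5$. The key new observation comes from combining the standard hypothesis with the exclusion of $2$ and $3$. The red-lengths at most $8$ are exactly $1$, $2$, and $7$, so any vertex in $\{4,5,6,7\}$ would form a blue edge with $1$ (distances $3,4,5,6$ are all blue-lengths) and hence cannot appear in the $K_5$. Combined with the hypothesis that $2$ and $3$ are absent, this forces $8$ to be the closest clockwise neighbor of $1$ within the $K_5$, so $\dis(1,8) = 7$ is the minimum pairwise distance. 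Consequently, every pair of vertices in the $K_5$ must be at distance at least $7$ and joined by a red edge.

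Producing a diagram with fixed colour red showing the red neighbors of $1$ and of $8$, one reads off that the common red neighbors of $1$ and $8$ are
$$S = \{15,21,22,24,26,28,30,31,37\},$$
and a direct check shows that every element of $S$ is automatically at distance at least $7$ from both $1$ and $8$. The task therefore reduces to finding three vertices of $S$ that are pairwise joined by red edges of distance at least $7$. To keep the casework manageable, I exploit the reflective symmetry about $\e(1,8)$, namely the map $u \mapsto 9-u \pmod{43}$, which preserves $S$, fixes $26$, and pairs $15 \leftrightarrow 37$, $21 \leftrightarrow 31$, $22 \leftrightarrow 30$, $24 \leftrightarrow 28$.

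The first substantive step is to rule out $15$: its red-length-$\geq 7$ neighbors inside $S$ are $\{22,28,31,37\}$, and the six pairs among these four vertices have distances in $\{3,6,9,15\}$, all blue-lengths, so no two of them can be used together with $15$. By the symmetry above, the same argument excludes $37$. The remaining candidates therefore lie in $\{21,22,24,26,28,30,31\}$, and a short pairwise-distance tabulation shows that the only pairs in this set that are both a red-length and of distance $\geq 7$ are $(21,28)$, $(21,31)$, and $(24,31)$; these three edges do not form a triangle (the missing pairs $(28,31)$ and $(21,24)$ both have distance $3$, which is blue). This yields the desired contradiction. The main obstacle is the bookkeeping of pairwise distances inside $S$, but the distance constraint $\geq 7$ together with the reflective symmetry cuts this down to only a handful of cases.
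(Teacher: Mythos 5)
Your proof is correct, and it shares the same skeleton as the paper's: read off the common red neighbours $S = \{15,21,22,24,26,28,30,31,37\}$ of $1$ and $8$, observe that the standard hypothesis together with the exclusion of $2$ and $3$ forces a minimum pairwise distance of $7$, and exploit the reflection $u \mapsto 9-u \pmod{43}$ about $\e(1,8)$. The endgame, however, is organised differently. The paper first eliminates $26$ (once $26$ is in, the distance-$\geq 7$ condition removes six of the other eight candidates, leaving too few vertices), then splits $S \setminus \{26\}$ into the symmetric halves $T = \{15,21,22,24\}$ and $U = \{28,30,31,37\}$, assumes WLOG that two vertices come from $T$, and shows they are forced to be $15$ and $22$, whereupon $22$ has no admissible neighbour in $U$ (distances $6,8,9,15$, all blue). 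You instead eliminate $15$ (and by symmetry $37$) by checking that its admissible neighbourhood $\{22,28,31,37\}$ inside $S$ is an independent set, and then tabulate all pairs among the remaining seven vertices to find that the admissible-pair graph has only the edges $(21,28)$, $(21,31)$, $(24,31)$ and is triangle-free. I verified your distance computations and they are accurate. The paper's route gets away with fewer pairwise checks because the WLOG step immediately pins down the pair $\{15,22\}$; your route trades that for an exhaustive but still short tabulation, and arguably makes the structural reason for failure (no triangle in the admissible-pair graph) more transparent. Either argument is a complete and valid proof of the lemma.
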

\begin{proof}
\begin{verbatim}

0        1         2         3         4
1234567890123456789012345678901234567890123
oxx    x  x xxx x x xxxx x x xxx x  x    xx
x    xxoxx    x  x xxx x x xxxx x x xxx x      
              E     EE E E E EE     E

\end{verbatim}
Suppose for the sake of contradiction that there exists a subgraph that satisfies the given conditions. The only vertices between $2$ and $7$ inclusive that form red edges with vertex $1$ in $\Cyc(43)$ are $2, 3$. But these vertices cannot be included in this subgraph. From the ``standard'' property of the subgraph, the minimum distance between any pair of vertices in the subgraph must be $7$.\\

The diagram above (with fixed colour set to red) shows that the remaining vertices in the subgraph must be within 
$$
S = \{ 15, 21, 22, 24, 26, 28, 30, 31, 37 \}.
$$
Suppose that vertex $26$ is in the subgraph. Then the minimum distance criteria gives us that the vertices ${  \{ 21, 22, 24, 28, 30, 31\} }$ cannot be in the subgraph. But then the only vertices left are $15$ and $37$, so there are not enough vertices to form the subgraph, contradiction. Hence, the subgraph cannot contain the vertex $26$.\\

We can partition the elements of $S$ apart from $26$ into the sets
\begin{align*}
T &= \{ 15, 21, 22, 24\}.\\
U &= \{ 28, 30, 31, 37\}.
\end{align*}
Notice that $T, U$ are symmetrical about $\e(1, 8)$, so we can can assume without loss of generality that the subgraph contains at least two vertices $a, b$ from $T$ where $a < b$. The minimum distance criteria means that at most one of vertices $21, 22, 24$ can be in the subgraph, meaning that we must have $a = 15$. Out of the vertices ${21, 22, 24}$, the vertex $15$ only forms a red edge with $22$, so we must have $b=22$. \\

However, $b$ does not form any red edges with vertices in $U$ because its distances from these vertices are ${6, 8, 9, 15}$ respectively, which are blue-lengths. We have reached a contradiction.



\end{proof}

\begin{lem}\label{allStandardRedK5s}
The only standard red $K_5$'s are formed from the tuples of vertices ${(1,2,3,23,24)}$,  ${(0,1,2,22,23)}$, or  ${(22,23,24,1,2)}$. These subgraphs correspond to $i=1,0,22$ respectively in Proposition \ref{2.1}.
\end{lem}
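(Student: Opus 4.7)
The plan is to reduce the classification to the three preceding lemmas via a short case analysis on the minimum-distance pair. By the definition of ``standard'' stated just before Lemma \ref{no12red}, any standard red $K_5$ contains vertex $1$ together with a vertex $a$ clockwise of $1$ such that $\dis(a,1) \leq 8$ realises the minimum distance among all pairs of vertices in the $K_5$. Since the edge $\e(a,1)$ lies inside the $K_5$ it must be red, so $\dis(a,1)$ is a red-length at most $8$. Consulting the colour table, the only red-lengths at most $8$ are $1, 2, 7$, so $a \in \{2, 3, 8\}$.

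The three values of $a$ correspond exactly to the three preceding lemmas. If $a = 2$, Lemma \ref{no12red} applies directly and yields precisely the three tuples listed in the statement. If $a = 3$, the minimality of $\dis(a,1) = 2$ forces vertex $2$ out of the $K_5$ (otherwise the pair $\{1,2\}$ would have distance $1 < 2$), so the hypothesis of Lemma \ref{no13red} is satisfied and no such standard red $K_5$ exists. If $a = 8$, the minimality of $\dis(a,1) = 7$ rules out both vertex $2$ (distance $1$) and vertex $3$ (distance $2$), so Lemma \ref{no18red} applies and again no such $K_5$ exists.

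No real obstacle arises here, since the substantive casework has been absorbed entirely by the three prior lemmas. The only point requiring care is verifying that the side-hypotheses of Lemmas \ref{no13red} and \ref{no18red} (the exclusion of vertex $2$, respectively of vertices $2$ and $3$) follow automatically from the minimality clause in the definition of ``standard''. Once this is observed, combining the three cases gives the claimed classification, and the three tuples identified in Lemma \ref{no12red} are the only standard red $K_5$'s in $\Cyc(43)$.
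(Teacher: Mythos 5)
Your proposal is correct and follows essentially the same route as the paper: identify that the near vertex forming a red edge with vertex $1$ at distance at most $8$ must be $2$, $3$, or $8$, and dispatch each case to Lemmas \ref{no12red}, \ref{no13red} and \ref{no18red} respectively. Your explicit check that the minimality clause in the definition of ``standard'' supplies the exclusion hypotheses of Lemmas \ref{no13red} and \ref{no18red} is a welcome point of care that the paper leaves implicit.
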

\begin{proof}
By definition, a standard red $K_5$ must contain the vertex $1$ and at least one of the vertices between $2$ and $9$ inclusive. However, the only vertices in this range that form red edges with vertex $1$ are vertices $2, 3, 8$. \\

These cases are all covered in Lemmas \ref{no12red}, \ref{no13red} and \ref{no18red}, and we only obtain the three standard red $K_5$'s listed above.
\end{proof}

\begin{lem}\label{Exactly43RedK5s}
There are exactly $43$ red $K_5$'s in $\Cyc(43)$. These are of the form given in Proposition \ref{2.1}.
\end{lem}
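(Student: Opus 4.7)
The plan is to lift Lemma \ref{allStandardRedK5s} from standard red $K_5$'s to all red $K_5$'s by a cyclic rotation argument. The key fact is that the edge-colouring of $\Cyc(43)$ depends only on distances, so rotation $v \mapsto v+k \pmod{43}$ is a colour-preserving automorphism; in particular it sends red $K_5$'s bijectively to red $K_5$'s.

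First, let $K$ be any red $K_5$ in $\Cyc(43)$. Going clockwise around the circle, the five arc-distances between consecutive vertices of $K$ sum to $43$, so by the pigeonhole principle at least one such arc-distance is at most $8$. Choose a pair of vertices $u, v$ in $K$ realising the \emph{minimum} edge distance $m$ of $K$, with $v$ lying $m$ steps clockwise of $u$; we have $m \leq 8$. Apply the rotation $w \mapsto w+(1-u) \pmod{43}$ to obtain a red $K_5$ $K'$ containing vertex $1$ and the vertex $1+m \in \{2,\ldots,9\}$, and since rotation preserves distances, $(1,1+m)$ is still a minimum-distance edge of $K'$. Thus $K'$ is a standard red $K_5$.

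By Lemma \ref{allStandardRedK5s}, $K'$ must be one of the three tuples $(1,2,3,23,24)$, $(0,1,2,22,23)$, $(22,23,24,1,2)$. All three are cyclic shifts of the base pattern $(0,1,2,22,23)$ (by $i=1$, $i=0$, $i=22$ respectively), so $K'$, and hence $K$, is of the form $(i,i+1,i+2,i+22,i+23) \pmod{43}$ for some $i$. This shows every red $K_5$ lies in the list of Proposition \ref{2.1}. Combined with Proposition \ref{2.1}, which produces $43$ pairwise distinct red $K_5$'s of this form, we conclude that the total count is exactly $43$.

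The main obstacle has already been handled inside Lemma \ref{allStandardRedK5s} (the exhaustive casework on standard $K_5$'s containing vertex $1$), so what remains here is essentially bookkeeping. The only point requiring a sentence of care is the reduction to standard form — specifically, choosing the rotation so that a globally minimum-length edge of $K$ becomes an edge from vertex $1$ to a vertex clockwise of $1$ in $\{2,\ldots,9\}$, which in turn guarantees the rotated $K_5$ satisfies the full ``standard'' hypothesis.
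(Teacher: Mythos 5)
Your proposal is correct and follows essentially the same route as the paper: reduce an arbitrary red $K_5$ to a standard one via a colour-preserving symmetry, invoke Lemma \ref{allStandardRedK5s}, and observe that the family of Proposition \ref{2.1} is closed under the symmetries used. The only (harmless) difference is that by rotating the anticlockwise endpoint of a minimum-distance edge to vertex $1$ you get away with rotations alone, whereas the paper also verifies closure of the family under reflections.
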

\begin{proof}
Let $A$ be the set of the $43$ red $K_5$'s of the form given in Proposition \ref{2.1}. From Lemma \ref{allStandardRedK5s} we know that there are exactly three standard red $K_5$'s and these are all in $A$. Hence, all red $K_5$'s must be symmetries of these three subgraphs. It suffices to show $A$ are closed under all possible rotations or reflections of the vertices in $\Cyc(43)$ arranged around the circle.\\

Closure under rotations is clear from the way the $A$ is defined. For closure under reflections it suffices to show that the red $K_5$ with vertices ${(0,1,2,22,23)}$, when reflected about the axis through vertex $0$ and the centre of the circle, creates a red $K_5$ that is still in $A$. This is sufficient because we can use additional rotations to show closure of all elements of $A$ under all possible reflections.\\

The subgraph that results from this reflection is the red $K_5$ with vertices ${(20, 21, 41, 42, 0)}$, which is in $A$ because it corresponds to $i=41$ in Proposition \ref{2.1}.\\

Hence, we obtain that there are no red $K_5$'s that are outside of $A$, so this completes the proof.
\end{proof}

\begin{thm} There are no red $K_5$'s in $\Exoo(42)$.
\end{thm}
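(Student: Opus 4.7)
The plan is to combine the two key lemmas proved earlier in Sections 1 and 3: Lemma~\ref{colourChanges43RedK5Removed}, which shows that the $43$ red $K_5$'s of Proposition~\ref{2.1} are all destroyed by passing from $\Cyc(43)$ to $\Exoo(42)$, and Lemma~\ref{Exactly43RedK5s}, which shows these are the \emph{only} red $K_5$'s that exist in $\Cyc(43)$ in the first place. Together these should pin down the claim almost immediately.

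The bridge between the two lemmas is the following observation: every colour change in the definition of $\Exoo(42)$ is from red to blue (as explicitly noted just after the definition). Consequently, the set of red edges of $\Exoo(42)$ is a subset of the set of red edges of $\Cyc(43)$ restricted to the vertex set $\{1, 2, \ldots, 42\}$. So the first step I would take is to formalise this monotonicity: any red $K_5$ in $\Exoo(42)$ is automatically a red $K_5$ in $\Cyc(43)$ on five vertices none of which is $0$.

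The second step is then to apply Lemma~\ref{Exactly43RedK5s}: such a red $K_5$ must be one of the $43$ tuples $(i, i+1, i+2, i+22, i+23)$ from Proposition~\ref{2.1}. The third step is to invoke Lemma~\ref{colourChanges43RedK5Removed}, which shows that every one of these $43$ tuples either uses vertex $0$ (and so is absent in $\Exoo(42)$) or contains an edge whose colour was changed from red to blue. In either case the purported red $K_5$ fails to exist in $\Exoo(42)$, giving a contradiction.

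There is really no obstacle here beyond making the monotonicity observation cleanly; the work has been done in the preceding lemmas. The only thing to double check is that the enumeration in Lemma~\ref{colourChanges43RedK5Removed} genuinely covers all $43$ values of $i \in \{0, 1, \ldots, 42\}$ (not just those with $i \neq 0, 20, 21, 41, 42$), since the $5$ values where vertex $0$ lies in the tuple are handled by vertex deletion rather than by an edge recolouring; this is already spelled out in that lemma's proof, so the final write-up is just a two-sentence citation of the two lemmas.
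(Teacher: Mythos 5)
Your proposal is correct and follows exactly the paper's argument: the paper's proof of this theorem is a one-line citation of Lemmas~\ref{colourChanges43RedK5Removed} and~\ref{Exactly43RedK5s}, and your monotonicity observation (all colour changes go from red to blue, so red $K_5$'s in $\Exoo(42)$ lift to red $K_5$'s in $\Cyc(43)$) is precisely the implicit bridge the paper relies on. No differences worth noting.
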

\begin{proof}
This follows from Lemmas \ref{colourChanges43RedK5Removed} and \ref{Exactly43RedK5s}.



\end{proof}

\section{Checking blue $K_5$'s in $\Cyc(43)$}

In this section we prove that there are no blue $K_5$'s in $\Cyc(43)$. In all the following diagrams, the fixed colour is set to be blue.\\

Suppose that for the sake of contradiction there exists a blue $K_5$ in $\Cyc(43)$ and assume without loss of generality that the vertex $1$ is part of the blue $K_5$.\\

Using the same pigeonhole principle argument as earlier (for the red $K_5$'s), we can assume that there is a vertex $a$ clockwise of $1$ in the subgraph such that $\dis(a,1) \leq 8$ and $\dis(a, 1)$ is the smallest distance between any pair of vertices in the subgraph. In a similar way as earlier,  a blue $K_5$ \textit{standard} if it satisfies these aforementioned conditions.\\

Using the table of red and blue-lengths, the other four vertices in the graph (in order to form blue edges with vertex $1$), must come from the set
$$
S = \{ 4, 5, 6, 7, 9, 10, 12, 16, 18, 20, 25, 27, 29, 33, 35, 36, 38, 39, 40, 41 \}.
$$
By symmetry, it suffices to prove that there are no standard blue $K_5$'s in $\Cyc(43)$. Suppose for the sake of contradiction that a standard blue $K_5$ exists. \\

By the minimum distance condition, the subgraph must contain a vertex between $2$ and $9$ inclusive. Of these vertices, the only ones that form blue edges with vertex $1$ are vertices ${4, 5, 6, 7, 9}$.\\

\textbf{Case 1: }The standard blue $K_5$ contains vertex $4$.\\

\begin{verbatim}

0        1         2         3         4  
1234567890123456789012345678901234567890123  
o  xxxx xx x   x x x    x x x   x xx xxxx 
x  o  xxxx xx x   x x x    x x x   x xx xxx 
xxxx  o  xxxx xx x   x x x    x x x   x xx  
x xxxx  o  xxxx xx x   x x x    x x x   x x
xx xxxx  o  xxxx xx x   x x x    x x x   x  
      
\end{verbatim}
The overlaps in the third and fourth rows of the diagram above show that the vertices that form blue edges with vertices $1$ and $4$ are 
$$
K = \{ 7, 9, 10, 12, 36, 38, 39, 41 \}.
$$
We can partition $K$ into the sets
\begin{align*}
L &= \{ 7, 9, 10, 12 \}.\\
M &= \{ 36, 38, 39, 41 \}.
\end{align*}
Notice that $L, M$ are symmetrical about $\e(1,4)$ so we can assume without loss of generality that the subgraph contains at least two vertices from $L$.\\

Suppose that vertex $7$ is chosen. Then the overlaps in the third, fourth and fifth rows in the diagram show that the only vertices that form blue edges with vertices ${1, 4, 7}$ are vertices ${10, 12, 39, 41}$. \\

Since the distances between pairs of these four vertices are the red-lengths ${ \{2, 12, 14, 16\} }$, then this shows that vertex $7$ cannot be chosen.\\

Suppose that vertex $9$ is chosen but not $7$. Then the overlaps in the third, fourth and sixth rows in the diagram show that the only vertices which form blue edges with vertices ${1, 4, 7}$ are vertices $12$ and $41$. Both of these must be included in order for the subgraph to have five vertices. However the distance between them is the red-length $14$, so this case also fails.\\

Thus, the only way for at least two vertices in the subgraph to be from $L$ is to include vertices $10, 12$. However, the distance between them is the red-length $2$, so this entire case fails.\\

\textbf{Case 2: }The standard blue $K_5$ contains vertex $5$ but not $4$.\\

\begin{verbatim}

0        1         2         3         4  
1234567890123456789012345678901234567890123  
o  xxxx xx x   x x x    x x x   x xx xxxx
xx  o  xxxx xx x   x x x    x x x   x xx xx
        EE     E   E        E   E     EE     
        
\end{verbatim}
The above diagram shows that the vertices that form blue edges with vertices $1$ and $5$ are 
$$
K = \{ 9, 10, 16, 20, 29, 33, 39, 40 \}.
$$
We can partition $K$ into the sets 
\begin{align*}
L &= \{ 9, 10, 16, 20 \}.\\
M &= \{ 29, 33, 39, 40 \}.
\end{align*}
Notice that $L, M$ are symmetrical about $\e(1, 5)$ so we can assume without loss of generality that the subgraph contains at least two vertices from $L$. The only pairs of vertices in $L$ that have blue edges between them are ${(9, 20), (10, 16), (16, 20)}$. This also means that we cannot use three vertices from $L$ in the subgraph, so there must be exactly two vertices from $L$ and exactly one vertex from $M$.\\

Suppose that vertices $10, 16$ are used. Then the distances from vertex $10$ to the vertices in $M$ are ${19, 20, 14, 13}$ respectively, of which only $19$ is a blue-length. Thus, the vertex $29$ from $L$ must also be used. However, the distance between vertex $16$ and $29$ is the red-length $13$, so this case fails.\\

Suppose that vertices $9, 20$ are used. Then the distances from vertex $9$ to the vertices in $M$ are ${20, 19, 13, 12}$ respectively, of which only $19$ is a blue-length. Thus, the vertex $33$ from $L$ must also be used. However, the distance between vertex $20$ and $33$ is the red-length $13$, so this case fails.\\

Suppose that vertices $16, 20$ are used. Then the distances from vertex $16$ to the vertices in $M$ are ${13, 17, 20, 19}$ respectively, of which only $17, 19$ are blue-lengths. Thus, either the vertex $33$ or $40$ from $L$ must be used. However, the distances from vertex $20$ to vertices $33, 40$ are the red-lengths $13, 20$, so this case fails.\\

Hence, there are no standard blue $K_5$'s in this case.\\

\textbf{Case 3: }The standard blue $K_5$ contains vertex $6$ but not $5$ or $4$.\\
\begin{verbatim}

0        1         2         3         4  
1234567890123456789012345678901234567890123  
o  xxxx xx x   x x x    x x x   x xx xxxx
xxx  o  xxxx xx x   x x x    x x x   x xx x
        EE E            E            E EE
        
\end{verbatim}
The above diagram shows that the vertices that form blue edges with vertices $1$ and $6$ are 
$$
K = \{ 9, 10, 12, 25, 38, 40, 41\}.
$$
In order for the subgraph to satisfy the ``standard'' criterion, there can be no vertex of distance less than $5$ to vertex $1$ or $6$ (since vertices $4, 5$ are not used). Thus, the vertices ${9, 10, 40, 41}$ cannot be used, so we are left with vertices ${12, 25, 38}$, where all three of these must be used. However, the distance between vertices $12$ and $25$ is the red-length $13$, so this case fails.\\

\textbf{Case 4: }The standard blue $K_5$ contains vertex $7$ but not $6, 5, 4$.\\
\begin{verbatim}

0        1         2         3         4  
1234567890123456789012345678901234567890123  
o  xxxx xx x   x x x    x x x   x xx xxxx
xxxx  o  xxxx xx x   x x x    x x x   x xx 
   E     E E   E E              E E   E E       
\end{verbatim}
The above diagram shows that the vertices that form blue edges with vertices $1$ and $7$ are 
$$
K = \{ 4, 10, 12, 16, 18, 33, 35, 39, 41\}.
$$
In order for the subgraph to satisfy the ``standard'' criterion, there can be no vertex of distance less than $6$ to vertex $1$ or $7$ (since vertices $4, 5, 6$ are not used). Thus, the vertices ${4, 10, 12, 39, 41}$ cannot be used, so we are left with vertices ${16, 18, 33, 35}$, where three of them must be used. However, since the pairs of vertices $(16, 18)$ and $(33, 35)$ have edges of the red-length $2$ between them, then at least two of the four vertices cannot be used, so this case fails.\\

\textbf{Case 5: }The standard blue $K_5$ contains vertex $9$ but not $7, 6, 5, 4$.\\
\begin{verbatim}

0        1         2         3         4  
1234567890123456789012345678901234567890123  
o  xxxx xx x   x x x    x x x   x xx xxxx
x xxxx  o  xxxx xx x   x x x    x x x   x x 
   EEE     E     E E            E E     E        
\end{verbatim}
The above diagram shows that the vertices that form blue edges with vertices $1$ and $9$ are 
$$
K = \{ 4, 5, 6, 12, 18, 20, 33, 35, 41 \}.
$$
In order for the subgraph to satisfy the ``standard'' criterion, there can be no vertex of distance less than $8$ to vertex $1$ or $9$ (since vertices $4, 5, 6, 7$ are not used). Thus, the vertices ${4, 5, 6, 12, 41}$ cannot be used, so we are left with vertices ${18, 20, 33, 35}$, where three of them must be used. However, since the pairs of vertices $(18, 20)$ and $(33, 35)$ have edges of the red-length $2$ between them, then at least two of the four vertices cannot be used, so this case fails.\\

Hence, by exhausting all cases, we have proven that there are no standard blue $K_5$'s in $\Cyc(43)$, which is sufficient to prove that there are no blue $K_5$'s in $\Cyc(43)$.

\section{Checking blue $K_5$'s in $\Exoo(42)$}

Since we have proven in the previous section that there are no blue $K_5$'s in $\Cyc(43)$, in order to prove the same for $\Exoo(42)$, it suffices to only consider $K_5$'s that contain at least one of the edges involved in the colour changes. We use a general argument when considering $K_5$'s that contain at least one of the fifteen edges between consecutive vertices that are involved in colour changes and then afterwards consider the edge $\e(11, 32)$.\\



We first give one example of our general argument for the fifteen blue colour-changed edges involving consecutive vertices.

\pagebreak

\begin{eg}
The blue edge $\e(4, 5)$ is not part of a blue $K_5$ in $\Exoo(42)$.
\end{eg} 

\begin{proof}
{
\begin{verbatim}

0        1         2         3         4   
1234567890123456789012345678901234567890123   
x  oX xxxx xx x   x x x    x x x   x xx xxx 
xx XoX xxxx xx x   x x x    x x x   x xx xx
E      EEE  E                         E  EE
       
\end{verbatim}
}
Suppose for the sake of contradiction that there exists a blue $K_5$ that includes $\e(4, 5)$. In the above diagram (where the fixed colour is blue and we include vertex $0$), the uppercase `X' represents the edge $\e(4, 5)$ that underwent a colour change. We see that the vertices that form blue edges with vertices $4$ and $5$ are
$$
K = \{ 8, 9, 10, 13, 39, 42, 0, 1 \}.
$$
No pair of these vertices are involved in colour changes. We can partition $K$ into the sets 
\begin{align*}
L &= \{ 8, 9, 10, 13 \}.\\
M &= \{ 39, 42, 0, 1 \}.
\end{align*}
Notice that $L, M$ are symmetrical about $\e(4, 5)$ so we can assume without loss of generality that the subgraph contains at least two vertices from $L$ (there are no colour changes here that affect this assumption). Since the distances $1$ and $2$ are red-lengths, then at most one vertex from each of $\{8, 9, 10\}$ and  $\{42, 0, 1\}$ can be used. This forces exactly two vertices from $L$ and exactly one vertex from $M$ to be in the subgraph.\\

Also, vertex $13$ from $L$ must be used. The distances from vertex $13$ to each vertex in $M$ are ${17, 14, 13, 12}$ respectively, of which only $17$ is a blue-length. Therefore, only vertex $39$ can be used from $M$. However, a similar argument using symmetry shows that the only vertex from $L$ that can be used together with $39$ is vertex $13$, so then we cannot pick a fifth vertex in the subgraph, contradiction.







       

\end{proof}

To generalise the idea in the previous proof, we will show that given any of the fifteen colour-changed edges between adjacent vertices $a, a+1$ in $\Exoo(42)$, there are always exactly eight vertices (when vertex $0$ is included) that form blue edges with $a$ and $a+1$ where four of the vertices are symmetrical with the other four about $\e(a, a+1)$.\\


We also note here that had, say $\e(0, 1)$ underwent a colour change but vertex $0$ had not been deleted, there would be the following blue $K_5$'s formed by vertices  ${(0,1,4,5,39)}$ and ${(0,1,4,5,9)}$.

\begin{lem}\label{8vertices}
For all $a\in\{4,5,6,7,13,14,15,16,23,24,30,33,39,40,41\}$, there are exactly eight vertices $b$ in $\Exoo(42)$ (when we allow vertex $0$ to be included) such that edges $\e(a,b)$ and $\e(a+1, b)$ are both blue (note that edge $\e(a, a+1)$ is also blue due to the colour changes). These vertices have the form 
$$a+4,a+5,a+6,a+9, (a-8), (a-5), (a-4), (a-3) \ (\bmod\; 43).$$

No pair of these eight vertices belongs to a colour change and the first four vertices listed are symmetric to the last four about $\e(a, a+1)$.


\end{lem}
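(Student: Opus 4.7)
The plan is to bootstrap from Lemma \ref{8verticescyclic}, which already gives the statement in $\Cyc(43)$, and carefully track which of the sixteen colour-changed edges in the definition of $\Exoo(42)$ could disturb that count. The key structural fact is that every colour change affects an edge of length $1$ or $21$, while every edge referred to in Lemma \ref{8verticescyclic} has length in $\{3,4,5,6,8,9\}$, so the two families of edges are essentially disjoint. Note also that every colour change is red-to-blue, so no vertex that already qualifies in $\Cyc(43)$ can drop out in $\Exoo(42)$.

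First I would verify that each of the eight vertices $a+4, a+5, a+6, a+9, a-8, a-5, a-4, a-3$ still forms blue edges with both $a$ and $a+1$ in $\Exoo(42)$. Since the edge lengths from $a$ and $a+1$ to these vertices all lie in $\{3,4,5,6,8,9\}$, none of them is touched by a colour change, so they stay blue. Next I would argue no additional vertex joins the list. For every $a$ in the given set, neither $a$ nor $a+1$ equals $11$ or $32$, so the only colour-changed edges incident to $a$ or $a+1$ lie among $\e(a-1,a)$, $\e(a,a+1)$, and $\e(a+1,a+2)$. The only possible new candidates are therefore $a-1$ and $a+2$, but the edges $\e(a-1,a+1)$ and $\e(a,a+2)$ have red-length $2$ and so remain red in $\Exoo(42)$; neither candidate qualifies.

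Finally I would verify that no pair of the eight listed vertices itself belongs to a colour change. The offsets $\{4,5,6,9,-8,-5,-4,-3\}$ all lie in $[-8,9]$, whose pairwise differences are bounded in absolute value by $17$, so no pair has difference $21$ and $\e(11,32)$ never arises. The pairs with difference $1$ are exactly $(a+4,a+5)$, $(a+5,a+6)$, $(a-5,a-4)$, $(a-4,a-3)$; for any of these to be colour-changed, one of $a+4, a+5, a-5, a-4$ must lie in the fifteen-element set $\{4,5,6,7,13,14,15,16,23,24,30,33,39,40,41\}$. The main (and only non-routine) step is to check by a direct computation mod $43$ that the four resulting conditions on $a$ give sets disjoint from our allowed fifteen-element set of $a$'s. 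The symmetry of the eight vertices about $\e(a,a+1)$ is then inherited immediately from Lemma \ref{8verticescyclic}, since the set of eight vertices is unchanged.
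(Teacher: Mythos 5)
Your proposal is correct and follows the same overall strategy as the paper's proof: both arguments bootstrap from Lemma \ref{8verticescyclic}, use the facts that all colour changes are red-to-blue and affect only edges of length $1$ or $21$ while the relevant edges from $a$ and $a+1$ have lengths in $\{3,4,5,6,8,9\}$, exclude the only possible new candidates $a-1$ and $a+2$ via the unchanged length-$2$ edges $\e(a-1,a+1)$ and $\e(a,a+2)$, and dispose of $\e(11,32)$ among pairs of the eight vertices by bounding their mutual distances. The one place you genuinely diverge is the final finite verification that no consecutive pair among the eight vertices is a colour-changed edge: the paper cases on $a$ in five blocks and shows that the resulting ranges of vertices ``fit in the gaps'' of the colour change table, whereas you invert the quantifier, identifying the four offset pairs at difference $1$ and solving for which values of $a$ would place them on a changed edge, reducing the problem to checking that four translates of the fifteen-element set are disjoint from it. Your organisation is arguably tighter (four uniform disjointness checks rather than five ad hoc range arguments), but you assert this computation rather than carry it out; for the record it does succeed, e.g.\ for the pair $(a+4,a+5)$ one needs $a+4$ to lie in the colour-change set, forcing $a\in\{0,1,2,3,9,10,11,12,19,20,26,29,35,36,37\}$, which is disjoint from the allowed set of $a$'s, and the other three cases behave the same way. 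To be complete you should write out all four translated sets explicitly.
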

\begin{proof} 
First, note that edges $\e(a,a+2)$ and $\e(a-1,a+1)$ must be red since their edge lengths are both the red-length $2$ and also the edges are not involved in colour changes. Thus, the vertices $a-1$ and $a+2$ cannot be included in our list above. Also, since neither $a$ nor $a+1$ can be $11$ or $32$, this means that we can apply Lemma \ref{8verticescyclic} here because the colour changes do not give us any extra vertices $b$ that form blue edges with vertices $a$ and $a+1$ apart from the eight ones listed in that lemma.\\

It now suffices to check that no pair of these eight vertices are involved in a colour change. Since no distance between a pair of vertices is large enough to correspond to edge $\e(11, 32)$, it suffices to check the edges between consecutive vertices. We check these values of $a$ in blocks.\\

\textbf{Case 1 (MS, RK, LZ): } $a\in \{4,5,6,7\}$. For the vertices $a+4, a+5, a+6$, we have that they range from $8$ and $13$ inclusive, which fits in the gap between $\e(7,8)$ and $\e(13,14)$ on the colour change table. For the vertices $a-5, a-4, a-3$ we have that they can only take on the values $42, 0, 1, 2, 3, 4$, which fits in the gap between $\e(41, 42)$ and $\e(4,5)$ on the colour change table.\\

\textbf{Case 2 (YJ, VY): } $a \in \{13, 14, 15, 16\}$. For the vertices $a+4, a+5, a+6$, we have that they range from $17$ and $22$ inclusive, which fits in the gap between $\e(16,17)$ to $\e(23,24)$ on the colour change table.  For the vertices $a-5, a-4, a-3$ we have that they range from $8$ to $13$ inclusive, which fits in the gap between $\e(7,8)$ and $\e(13,14)$ on the colour change table. \\

\textbf{Case 3 (DS): } $a\in\{23,24\}$. For the vertices $a+4,a+5,a+6$ we have that they range from $27$ to $30$ inclusive, which fits in the gap between $\e(24,25)$ and $\e(30,31)$ in the colour change table. For the vertices $a-5, a-4, a-3$ we have that they range from $18$ to $21$ inclusive, which fits in the gap between $\e(16,17)$ and $\e(23,24)$ in the colour change table.\\

\textbf{Case 4 (LC, CK): } $a \in \{30, 33\}$. For the vertices $a+4, a+5, a+6$ we have that they range from $34$ to $39$ inclusive, which fits in the gap between $\e(33,34)$ and $\e(39,40)$ in the colour change table. For the vertices $a-5, a-4, a-3$ we have that they range from $25$ to $30$ inclusive, which fits in the gap between $\e(24,25)$ and $\e(30,31)$ in the colour change table.\\ 

\textbf{Case 5 (RZ, LG): } $a\in \{39, 40, 41\}$. For the vertices $a+4, a+5, a+6$ we have that they range from $0$ to $4$ inclusive, which fits in the gap between $\e(41,42)$ and $\e(4,5)$ in the colour change table. For the vertices $a-5, a-4, a-3$, we have that they range from $34$ to $38$ inclusive, which fits in the gap between $\e(33,34)$ and $\e(39,40)$ in the colour change table.\\

Hence, there are no colour changes involving consecutive pairs of these eight vertices.

\end{proof}

We also note that the absence of colour changes does not depend on the deletion of vertex $0$ and only on the colour change table.

\begin{thm}\label{blueconsec}
The fifteen blue edges of the form $\e(a,a+1)$ in $\Exoo(42)$ that are involved in colour changes, are not part of any blue $K_5$'s.
\end{thm}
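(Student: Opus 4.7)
The plan is to mimic the example proof for $\e(4,5)$, but do it uniformly in $a$. The key is that Lemma \ref{8vertices} provides, for every relevant $a$, the same structural picture: exactly eight candidate vertices forming blue edges with both $a$ and $a+1$, split into two reflection-symmetric halves $L = \{a+4, a+5, a+6, a+9\}$ and $M = \{a-8, a-5, a-4, a-3\}$, and none of the ten edges among $\{a, a+1\} \cup L \cup M$ is colour-changed (apart from $\e(a, a+1)$ itself, which is already blue). Because the relevant distances inside this 10-vertex configuration depend only on differences modulo $43$, the casework is identical for every one of the fifteen values of $a$.

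Fixing such an $a$ and assuming a blue $K_5$ exists that contains $\e(a, a+1)$, we need to pick three more vertices from $L \cup M$. Since the reflection about $\e(a, a+1)$ swaps $L$ and $M$ pairwise (mapping $a+k \leftrightarrow a+1-k$) and preserves the colouring on the 10-vertex subgraph (by Lemma \ref{8vertices}, combined with the fact that the edges from $\{a, a+1\}$ into $L \cup M$ all have blue-lengths $3,4,5,6,8,9$, so are also untouched by colour changes), we may assume by symmetry that at least two of these three vertices lie in $L$. The vertices $a+4, a+5, a+6$ are mutually at red-length distances $1$ and $2$, so at most one of them is in the $K_5$; hence we cannot have three vertices in $L$, and the only way to have two in $L$ is to take $a+9$ together with exactly one of $a+4, a+5, a+6$. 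That leaves exactly one vertex to pick from $M$.

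The one vertex from $M$ must form a blue edge with $a+9$. Computing $\dis(a+9, a-8)=17$, $\dis(a+9, a-5)=14$, $\dis(a+9, a-4)=13$, $\dis(a+9, a-3)=12$, only $17$ is a blue-length, so the $M$-vertex must be $a-8$. But now consider the chosen vertex from $\{a+4, a+5, a+6\}$: its distance to $a-8$ is $12$, $13$, or $14$ respectively, each a red-length. So the selected $L$-vertex does not form a blue edge with $a-8$, contradicting the assumed $K_5$.

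I do not expect any real obstacle beyond keeping track of the arithmetic; the main conceptual move is recognising that Lemma \ref{8vertices} already encapsulates all the colour-change bookkeeping needed, so the entire argument reduces to the cyclic distance calculations above. If vertex $0$ happens to lie in $L \cup M$ (which occurs for a few values of $a$, e.g. $a \in \{4, 5, 39\}$), we simply have one fewer candidate available in $\Exoo(42)$, which only strengthens the contradiction and so does not affect the proof.
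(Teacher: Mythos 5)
Your proposal is correct and follows essentially the same route as the paper: it invokes Lemma \ref{8vertices} for the eight candidate vertices, rules out more than one of $a+4,a+5,a+6$ via the red-lengths $1,2$, and uses the distances $17,14,13,12$ from $a+9$ to the mirrored block (and $12,13,14$ back) to force a contradiction. The only difference is cosmetic: you reduce to ``at least two vertices in $L$'' by the reflection symmetry, whereas the paper splits on whether $a+9$ and $a-8$ are present; your write-up is, if anything, slightly more explicit about the final step.
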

\begin{proof}
Suppose for the sake of contradiction that such an edge is part of a blue $K_5$. By Lemma \ref{8vertices}, there are exactly eight vertices that form blue edges both vertex $a$ and $a+1$. We know that no pair of these eight vertices are involved in colour changes.\\

Out of the first four vertices listed in Lemma \ref{8vertices}, the vertices ${a+4, a+5, a+6}$ are consecutive. In the absence of colour changes, since the distances of $1, 2$ are red-lengths, then at most one of these three vertices can be in the subgraph.\\

Since the distance from vertex $a+9$ to the last four vertices are ${17, 14, 13, 12}$ respectively, of which only $17$ is a blue-length, then vertex $a+9$ only forms a blue edge with $a-8$ out of the last four vertices. \\

By symmetry, the vertex $a-8$ only forms a blue edge with $a+9$ out of the first four vertices. Thus, if either vertex $a+9$ or $a-8$ is part of the subgraph, then both must be present because otherwise we would not have enough vertices in the subgraph. If neither are in the subgraph, then we again have insufficient vertices because we can only use one vertex from each of the two blocks of three consecutive vertices.\\

Hence, we have reached a contradiction.


\end{proof}



 

\begin{prop}\label{blue1132} The blue edge $\e(11, 32)$ is not part of any blue $K_5$ in $\Exoo(42)$.
\end{prop}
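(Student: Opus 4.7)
The plan is to mirror the strategy of Theorem \ref{blueconsec}: find all vertices that form blue edges with both $11$ and $32$, use the reflection symmetry about $\e(11, 32)$ to halve the casework, and check the few remaining subcases.

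First I would determine the set $K$ of vertices $b \in \Exoo(42) \setminus \{11, 32\}$ for which both $\e(11, b)$ and $\e(32, b)$ are blue. Since no colour-changed edge other than $\e(11, 32)$ itself is incident to vertex $11$ or vertex $32$, this amounts to intersecting the blue neighbourhoods of $11$ and $32$ in $\Cyc(43)$ and discarding the deleted vertex $0$. A short tabulation from the blue-length table gives an eight-element $K$, which is invariant under the involution $x \mapsto -x \Mod{43}$, the reflection swapping $11$ with $32$. Split $K$ into its two four-element halves $L$ and $M$.

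By the Symmetry in Colourings proposition, a hypothetical blue $K_5$ through $\e(11, 32)$ can be assumed, without loss of generality, to have at least two of its remaining three vertices in $L$. A quick enumeration of edges inside $L$ shows that its blue subgraph contains no triangle, killing the ``three from $L$'' subcase; so the $K_5$ would have to consist of $11$, $32$, a blue-connected pair in $L$, and a single vertex of $M$ blue-adjacent to both endpoints of that pair.

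The crux, and the main routine step, will be the observation that the two vertices of $L$ lying on the arc from $11$ that runs away from $32$ have only red edges to every vertex of $M$. Since every blue edge inside $L$ contains one of these two vertices, no vertex of $M$ can extend such a pair to a blue triangle, and the contradiction follows. I would finish by noting that none of the colour-changed edges lies among the edges between the eight candidate vertices (nor among the edges joining those eight to $\{11, 32\}$), so the $\Cyc(43)$ colour calculations transfer verbatim into $\Exoo(42)$.
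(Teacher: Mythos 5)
Your proposal is correct and follows essentially the same route as the paper's proof: the same eight-vertex set $K=\{6,8,15,17,26,28,35,37\}$, the same symmetric split about $\e(11,32)$ reducing to two vertices in $L=\{6,8,15,17\}$, and the same decisive fact that $6$ and $8$ (which every blue pair in $L$ must contain) have only red edges to $M$. The only cosmetic difference is that you rule out three vertices from $L$ via the absence of a blue triangle, where the paper uses the two red length-$2$ edges $(6,8)$ and $(15,17)$ directly.
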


\begin{proof}[Proof by AQ]\ \\
\begin{verbatim}

0        1         2         3         4   
1234567890123456789012345678901234567890123
 xx xxxx  o  xxxx xx x   x x x X  x x x   x 11
   x x x  X x x x   x xx xxxx  o  xxxx xx x 32
     E E      E E        E E      E E     E
 
\end{verbatim}
Suppose for the sake of contradiction that there exists a blue $K_5$ that includes $\e(11, 32)$. In the above diagram (where the fixed colour is blue), the uppercase `X' represents the edge $\e(11, 32)$ that underwent a colour change. We see that the vertices that form blue edges with vertices $11$ and $32$ (apart from the removed vertex $0$) are 
$$
K = \{ 6, 8, 15, 17, 26, 28, 35, 37 \}.
$$
We can partition $K$ into the sets 
\begin{align*}
L &= \{ 6, 8, 15, 17 \}.\\
M &= \{ 26, 28, 35, 37 \}.
\end{align*}
Notice that $L, M$ are symmetrical about $\e(11, 32)$ so we can assume without loss of generality that the subgraph contains at least two vertices from $L$ (there are no colour changes here that affect this assumption).\\

Since ${\dis(6,8) = \dis(15,17)=2}$, which is a red-length, then at most one of the vertices in each of $(6, 8)$ and $(15, 17)$ can be used. Therefore, exactly two vertices in each of $L$ and exactly one from $M$ must be in the subgraph.\\

If vertex $6$ is used, observe that the distances from it to each vertex in $M$ are ${20, 21, 14, 12}$ respectively, which are all red-lengths. Thus, we cannot select any vertex from $M$, so this case fails.\\

If vertex $8$ is used, observe that the distances from it to each vertex in $M$ are ${18, 20, 16, 14}$ respectively, which are all red-lengths. Thus, we cannot select any vertex from $M$, so this case fails.\\

Hence, at most one vertex from $L$ can be selected for the subgraph, contradiction.


\end{proof}

This concludes the proof that there are no blue $K_5$'s in $\Exoo(42)$.


 

\section{Further investigations}
One benefit of the proofs given in this article is that we can gain a greater understanding for the reasons for why Exoo's construction works. Also, the following questions are now readily accessible:

\begin{enumerate}
    \item If vertex $0$ was not deleted from $\Cyc(43)$ but the same colour changes were made, how many red and blue $K_5$'s are there?
    
    \item Is there a colouring of $K_{43}$ with at most ten monochromatic $K_5$'s?
\end{enumerate}

\begin{eg}Define a colouring on $K_{43}$ the same as $\Cyc(43)$ and make the same colour changes as in $\Exoo(42)$ but without deleting vertex $0$. This colouring has exactly four red $K_5$'s represented by the tuples
$$
(0,1,2,22,23), (0,1,21,22,42), (0,1,21,22,23), (0,20,21,22,42)
$$
and nine blue $K_5$'s represented by the tuples
\begin{align*}
&(0,6,11,15,32), (0,6,11,17,32), (0,8,11,17,32), \\
&(0,11,28,32,37), (0,11,26,32,37), (0,11,26,32,35),\\
&(0,11, 15, 26,32), (0, 11, 17, 26, 32), (0, 11, 17, 28, 32).\\
\end{align*}
\end{eg}
\begin{proof}
The $5$-tuples of vertices $(i,i+1,i+2,i+22,i+23)$ for ${i \in \{20,21,41,42,0\}}$ form the five red $K_5$'s in $\Cyc(43)$ that use vertex $0$. Only the subgraph corresponding to $i=41$ is no longer a red $K_5$ after the colour changes (specifically, the colour change of $\e(41, 42)$). \\

\pagebreak 

\begin{verbatim}

0        1         2         3         4   
1234567890123456789012345678901234567890123
 xx xxxx  o  xxxx xx x   x x x X  x x x   x 11
   x x x  X x x x   x xx xxxx  o  xxxx xx x 32
  xxxx xx x   x x x    x x x   x xx xxxx  o            
     E E      E E        E E      E E     
 
\end{verbatim}

For the blue $K_5$'s, observe that our proof of Theorem \ref{blueconsec} did not rely on vertex $0$ being removed, so we only have to search for blue $K_5$'s  which include vertex $0$ and edge $\e(11, 32)$.\\

Using the diagram above and our work and notation used in the proof of Proposition \ref{blue1132}, note that all the vertices in 

$$
K = \{ 6, 8, 15, 17, 26, 28, 35, 37 \}
$$ 
form blue edges with vertex $0$. Hence, we just have to find two additional vertices $K$ that have a blue edge between them. \\

From our earlier work, if vertex $6$ is chosen, then we cannot choose any vertices from $M$, so we must either choose vertex $15$ or $17$ from $M$. This gives us the first two tuples listed, since $\e(6,15)$ and $\e(6,17)$ have lengths $9, 11$ respectively, which are blue-lengths.\\

If vertex $8$ is chosen, we similarly cannot choose any vertices from $M$, so we must either choose vertex $15$ or $17$ from $M$. This gives us the third tuple listed, since $\e(8,15)$ and $\e(8,17)$ have lengths $7, 9$ respectively, where only $9$ is a blue-length.\\

If vertex $37$ is used, since it is symmetrical with vertex $6$ about $\e(11,32)$, we obtain the fourth and fifth tuples listed (which are symmetrical with the first two tuples respectively).\\

If vertex $35$ is used, since it is symmetrical with vertex $8$ about $\e(11,32)$, we obtain the sixth tuple listed (which is symmetrical with the third tuple).\\

If none of these four vertices are used, note that the distances between pairs of vertices in $\{ 15, 17, 26, 28\}$ are one of ${2, 9, 11, 13}$, where only $9$ and $11$ are blue-lengths, which corresponds to the pairs of vertices $(15,26)$, $(17, 26)$, $(17, 28)$. These give us the seventh, eighth and ninth tuples listed respectively.\\


\end{proof}

\begin{eg}Define a colouring on $K_{43}$ the same as $\Cyc(43)$ and make the same colour changes in $\Exoo(42)$ but without deleting vertex $0$ but also change the colour of $\e(21, 22)$ to blue. The resulting graph has one red $K_5$ and eight blue $K_5$'s.
\end{eg}

This leads naturally to the following questions:\\

{\bf Questions:}
\begin{enumerate}
\item If vertex $0$ was not deleted from $\Cyc(43)$ but you are free to make colour changes from red to blue, what is the minimum number of monochromatic $K_5$'s you can achieve?\\
\item What is the fewest number of monochromatic $K_5$'s possible for a colouring of $K_{43}$?\\
\end{enumerate}

If the answer to the last question listed is non-zero then the conjecture is true, otherwise the conjecture is false.\\

If the conjecture is true, then this number would be what is known as the Ramsey multiplicity of $K_5$. See for example \cite{PR}, where the Ramsey multiplicity of $K_4$ is shown to be $9$.

It has been remarked by Exoo in unpublished work that he has come across an example with only two monochromatic $K_5$'s.


{\sc Dr Michael Sun's School of Maths,  Sydney, Australia}

\end{document}